\documentclass[11pt,reqno]{amsart}

\usepackage{mathrsfs}
\usepackage{amssymb}
\usepackage{amsthm}
\usepackage{amsmath,amsfonts,amssymb,esint}
\usepackage{graphics,color}
\usepackage{enumerate}

\usepackage{marginnote}
\usepackage{xfrac}
\usepackage{mathtools}
\usepackage{hyperref}

\usepackage{comment}

\setcounter{section}{-1}

\newtheorem{theorem}{Theorem}[section]
\newtheorem{lemma}[theorem]{Lemma}
\newtheorem{proposition}[theorem]{Proposition}
\newtheorem{corollary}[theorem]{Corollary}
\newtheorem{definition}[theorem]{Definition\rm}
\newtheorem{remark}{Remark}

\newcommand{\T}{\ensuremath{\mathbb{T}}}
\newcommand*{\R}{\ensuremath{\mathbb{R}}}

\newcommand*{\N}{\ensuremath{\mathbb{N}}}
\newcommand*{\Z}{\ensuremath{\mathbb{Z}}}
\newcommand*{\C}{\ensuremath{\mathbb{C}}}

\renewcommand*{\div}{\ensuremath{\mathrm{div\,}}}

\newcommand*{\Id}{\ensuremath{\mathrm{Id}}}

\newcommand{\eps}{\varepsilon}

\renewcommand*{\P}{\ensuremath{\mathcal{P}}}

\newcommand*{\RR}{\ensuremath{\mathcal{R}}}
\newcommand{\norm}[1]{\left\|#1\right\|}
\newcommand{\abs}[1]{\left|#1\right|}
\DeclarePairedDelimiter{\floor}{\lfloor}{\rfloor}
\newcommand{\ip}[1]{\left\langle#1\right\rangle}

\DeclareMathOperator{\support}{support}

\begin{document}

\begin{abstract}
In recent works by Isett \cite{Isett}, and later by Buckmaster, De Lellis, Isett and Sz\'ekelyhidi Jr.\ \cite{BDIS}, iterative schemes were presented for constructing solutions belonging to the H\"older class $C^{\sfrac15-\eps}$ of the 3D incompressible Euler equations which do not conserve the total kinetic energy.  The cited work is partially motivated by a conjecture of Lars Onsager in 1949 relating to the existence of $C^{\sfrac13-\eps}$ solutions to the Euler equations which dissipate energy.  In this note we show how the later scheme can be adapted in order to prove the existence of non-trivial H\"older continuous solutions which for almost every time belong to the critical Onsager H\"older regularity $C^{\sfrac13-\eps}$ and have compact temporal support.  
\end{abstract}

\title[Onsager's conjecture almost everywhere in time]
{Onsager's conjecture almost everywhere in time}

\author{Tristan Buckmaster}
\address{Institut f\"ur Mathematik, Universit\"at Leipzig, D-04103 Leipzig}
\email{tristan.buckmaster@math.uni-leipzig.de}

\maketitle

\section{Introduction}
\nocite{bds2}

In what follows $\T^3$ denotes the $3$-dimensional torus, i.e. $\T^3 = {\mathbb S}^1\times {\mathbb S}^1 \times
{\mathbb S}^1$.  Formally, we say $(v,p)$ solves the \emph{incompressible Euler equations} if
\begin{equation}\label{eulereq}
\left\{\begin{array}{l}
\partial_t v+\div v\otimes v +\nabla p =0\\
\div v = 0
\end{array}\right..
\end{equation}

Suppose $v$ is such a solution, then we define its \emph{kinetic energy}, as
\[E(t):=\frac{1}{2}\int_{\T^3} \abs{v(x,t)}^2~dx.\]
A simple calculation applying integration by parts yields that for any classical solution of \eqref{eulereq} the kinetic energy is in fact conserved in time.  This formal calculation does not however hold for distributional solutions to Euler (cf. \cite{Scheffer93,Shnirelmandecrease,DS1,DS2,Wiedemann,DSsurvey}).  

In fact in the context of 3-dimensional turbulence, flows  \emph{dissipating} energy in time have long been considered. A key postulate of Kolmogorov's K41 theory \cite{Kolmogorov} is that for homogeneous, isotropic turbulence, the dissipation rate is non-vanishing in the inviscid limit.  In particular, defining the \emph{structure functions} for homogeneous, isotropic turbulence
\[S_p(\ell):=\ip{\left[(v(x+\hat \ell)-v(x))\cdot\frac {\hat \ell}{\ell}\right]^p},\]
where $\hat \ell$ denotes a spatial vector of length $\ell$, Kolmogorov's famous four-fifths law can be stated as
\begin{equation}\label{e:four-fifths}
S_3(\ell)=-\frac45\eps_d \ell,
\end{equation}
where here $\eps_d$ denotes the mean energy dissipation per unit mass.  More generally, Kolmogorov's scaling laws can be stated as
\begin{equation}\label{e:scaling_laws}
S_p(\ell)= C_p \eps_d^{\sfrac p3} \ell^{\sfrac p3},
\end{equation}
for any positive integer $p$.

A well known consequence of the above scaling laws is the Kolmogorov spectrum, which postulates a scaling relation on the `energy spectrum' of a turbulent flow (cf.\ \cite{FrischBook,EyinkSreenivasan}).  It was this observation that provided motivation for Onsager to conjecture in his famous note \cite{Onsager} on statistical hydrodynamics,  the following dichotomy:
\begin{enumerate}
\item[(a)] Any weak solution $v$ belonging to the H\"older space $C^\theta$ for $\theta>\frac{1}{3}$ conserves the energy.
\item[(b)] For any $\theta<\frac{1}{3}$ there exist weak solutions $v\in C^\theta$ which do not conserve the energy.
\end{enumerate}

Part (a) of this conjecture has since been resolved: it was first considered by Eyink in \cite{Eyink} following Onsager's original calculations and later proven by Constantin, E and Titi in \cite{ConstantinETiti}.  Subsequently, this later result was strengthened by showing that under weakened assumptions on $v$ (in terms of Besov spaces) kinetic energy is conserved  \cite{RobertDuchon,CCFS2007}.  

Part (b) remains an open conjecture and is the subject of this note.  The first constructions of non-conservative H\"older-continuous ($C^{\sfrac{1}{10}-\eps}$) weak solutions appeared in work of De Lellis and Sz\'ekelyhidi Jr.\ \cite{DS4}, which itself was based on their earlier seminal work \cite{DS3} where continuous weak solutions were constructed.  Furthermore, it was shown in the mentioned work that such solutions can be constructed obeying any prescribed smooth non-vanishing energy profile.  In recent work \cite{Isett}, P. Isett introduced a number of new ideas in order to construct non-trivial $1/5-\eps$  H\"older-continuous weak solutions with compact temporal support.  This construction was later improved by Buckmaster,  De Lellis and Sz\'ekelyhidi Jr.\ \cite{BDIS}, following more closely the earlier work \cite{DS3,DS4}, in order construct $1/5-\eps$  H\"older-continuous weak solution obeying a given energy profile. 

In this note we give a proof of the following theorem.  

\begin{theorem}\label{t:main}
There exists is a non-trivial continuous vector field $v\in C^{\sfrac{1}{5}-\eps} ( \T^3 \times (-1, 1), \R^3)$ with compact support in time and a continuous
scalar field $p\in C^{\sfrac{2}{5}-2\eps} (\T^3\times (-1,1))$ with the following properties:
\begin{itemize}
\item[(i)]  The pair $(v,p)$ solves the incompressible
Euler equations \eqref{eulereq} in the sense of distributions.
\item[(ii)] There exists a set $\Omega\subset (-1,1)$ of Hausdorff dimension strictly less than $1$ such that if $t\notin\Omega$ then $v(\cdot,t)$ is H\"older $C^{1/3-\eps}$ continuous and $p$ is H\"older $C^{2/3-2\eps}$ continuous.\footnote{More precisely, the Hausdorff dimension $d$ is such that $1-d>C\eps^2$ for some positive constant $C$.}
\end{itemize}
\end{theorem}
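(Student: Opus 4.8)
The plan is to run the convex integration scheme of \cite{BDIS} but with a carefully chosen, time-dependent frequency growth rate so that the regularity of each Newton-type correction is boosted on a small set of times and only degrades near a thin \emph{bad set} $\Omega$. Recall that in \cite{BDIS} one constructs a sequence of smooth solutions $(v_q, p_q, \mathring R_q)$ of the Euler--Reynolds system with $\|\mathring R_q\|_0 \lesssim \delta_{q+1} \lambda_q^{-3\gamma}$-type estimates, where $\lambda_q = \lambda_0^{b^q}$ and $\delta_q = \lambda_q^{-2\beta}$, and the perturbation $w_{q+1} = v_{q+1}-v_q$ obeys $\|w_{q+1}\|_0 \lesssim \delta_{q+1}^{1/2}$ and $\|w_{q+1}\|_1 \lesssim \delta_{q+1}^{1/2}\lambda_{q+1}$. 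The exponent $\beta$ one can reach is governed by the interplay $\delta_{q+1}\lambda_{q+1} \lesssim \delta_q^{1/2}\delta_{q+1}^{1/2}\lambda_q$; in \cite{BDIS} this forces $\beta < 1/10$, i.e. $v \in C^{1/5-\eps}$ after interpolation with the $C^1$ bound. The point of this note is that if, \emph{for a given time $t$}, the Reynolds stress happens to be much smaller than its worst-case bound on the relevant time scale, then the perturbation needed there is correspondingly smaller, and one gains spatial regularity at that time.

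First I would set up the iteration exactly as in \cite{BDIS}, retaining the building blocks: the stationary Mikado/Beltrami flows, the stress-cancellation identity, the gluing of the quadratic error, and the standard transport and bilinear microlocal estimates. The new ingredient is a second, slowly varying length scale. Concretely, I would introduce a nonnegative cutoff-type weight depending only on time, and prove by induction the \emph{two-tiered} estimates: the usual global bounds $\|v_q\|_0, \|\mathring R_q\|_0 \le$ (as in \cite{BDIS}), \emph{together with} improved bounds $\|\mathring R_q(\cdot,t)\|_0 \lesssim \delta_{q+1}^{1+\kappa}$ valid for $t$ outside a set $\Omega_q$ whose measure (indeed, whose covering number at scale $\lambda_q^{-1}$) shrinks geometrically. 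Since $w_{q+1}$ is built from $\mathring R_q$, its size at such $t$ is $\lesssim \delta_{q+1}^{(1+\kappa)/2}$, while its $C^1$ norm is still $\lesssim \delta_{q+1}^{1/2}\lambda_{q+1}$ by the unchanged high-frequency support; interpolating gives, at times $t \notin \Omega := \bigcap_N \bigcup_{q\ge N}\Omega_q$, the regularity $C^{\alpha}$ with $\alpha$ pushed up to $1/3-\eps$ for a suitable choice of the parameters $b$, $\beta$, $\kappa$.

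The key steps, in order: (1) fix the global parameters $b>1$, $\beta<1/10$ and an auxiliary small parameter $\kappa$ relating the two length scales; (2) reproduce the inductive proposition of \cite{BDIS}, adding to it the conditional estimate on $\mathring R_q(\cdot,t)$ for $t\notin\Omega_q$ and the geometric decay of $|\Omega_q|$ (more precisely of its covering number at scale $\lambda_q^{-1}$); (3) perform the construction of $w_{q+1}$ and the new $\mathring R_{q+1}$ verbatim as in \cite{BDIS}, but track how the conditional smallness of $\mathring R_q$ propagates — both through the transport estimate for the flow maps and through the quadratic and high-frequency error terms — to produce $\Omega_{q+1}\supset\Omega_q$ with the claimed covering bound; (4) pass to the limit $v=\lim v_q$, $p=\lim p_q$, which solve Euler in $\mathcal D'$ with compact temporal support as in \cite{BDIS,Isett}; (5) use summability of $\sum \|w_{q+1}\|_{C^{1/3-\eps}}$ \emph{restricted to} $t\notin\Omega$ to conclude $v(\cdot,t)\in C^{1/3-\eps}$ there, and deduce the pressure regularity from $p = -\Delta^{-1}\div\div(v\otimes v)$; (6) estimate $\dim_{\mathcal H}\Omega$ from the covering numbers via a Frostman-type argument, obtaining $1-\dim_{\mathcal H}\Omega \gtrsim \eps^2$.

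The main obstacle is step (3): one must check that the conditional smallness of $\mathring R_q$ near a good time $t$ is not destroyed by the construction. The delicate points are the transport/gluing step, where the backward flow of $v_q$ spreads information over a time interval of length $\sim (\delta_q^{1/2}\lambda_q)^{-1}$ — so $\Omega_{q+1}$ must be defined as a slight thickening of $\Omega_q$, and one needs $b$ large enough that this thickening does not overwhelm the geometric decay — and the quadratic error $w_{q+1}\otimes w_{q+1}$, whose low-frequency part inherits the size $\delta_{q+1}^{1+\kappa}$ only if both factors are taken at good times, forcing the estimates to be genuinely local in $t$. A secondary subtlety is book-keeping the \emph{two} competing convex-integration constraints simultaneously: the global one (which still caps the ambient regularity at $1/5-\eps$, hence the statement of Theorem~\ref{t:main}(i)) and the conditional one (which yields $1/3-\eps$ on the complement of $\Omega$), and choosing $\kappa=\kappa(\eps)$ and $b=b(\eps)$ so that both close. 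Once the inductive proposition is correctly formulated, the remaining steps are routine adaptations of \cite{BDIS}.
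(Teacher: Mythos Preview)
Your overall architecture matches the paper's: a two-tiered iteration with global $C^{1/5-\eps}$ bounds plus improved time-local bounds off a thin set, the bad set realized as a $\limsup$, and its Hausdorff dimension controlled via covering numbers. That is exactly the skeleton of the paper's proof.

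What is missing is the concrete mechanism that makes your step (3) close, and running \cite{BDIS} ``verbatim'' will not do it. Two modifications to \cite{BDIS} are essential. First, in \cite{BDIS} the amplitude $\rho_l$ is tied to a prescribed energy profile, not to the local Reynolds stress; here one must instead take $\rho_l = 2r_0^{-1}\|\mathring R(\cdot, l\mu^{-1})\|_0$, so that the perturbation really is as small as the local stress allows. Second, and this is the heart of the matter, the bad set at stage $q$ is \emph{not} determined by where the stress happens to be large, nor is it a transport-thickening of $\Omega_{q-1}$ as you suggest: it is precisely the set where the time-cutoffs $\chi_l$ overlap. The worst term in the new Reynolds error carries a factor $\mu$ coming from $\chi_l'$; away from the overlaps that term is identically zero, and the remaining errors already obey the $1/3$-scaling. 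In \cite{BDIS} the overlaps have width $\sim \mu^{-1}$ and their union is essentially the whole time axis; the paper's device is to shrink the overlap width to $\lambda_{q+1}^{-\eps_1}\mu^{-1}$, so that the covering sum $\sum_q \mu_q\,(\lambda_q^{-\eps_1}\mu_q^{-1})^{d}$ converges for some $d<1$. Without this you have no reason for $\Omega$ to be thin. (There is no gluing step here, incidentally; that technology postdates this paper.)

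A smaller point: the paper tracks the pressure increments, including a second-order spatial bound, directly through the iteration; this bound is actually needed inside the scheme to kill an extra error term in the $D_t^2$ estimates, so even if you recover the final pressure regularity from $p=-\Delta^{-1}\div\div(v\otimes v)$, you still need the inductive pressure control during the construction.
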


\medskip

\noindent{\bf Relation to intermittency.} The theory of intermittency is born of an effort to explain the experimental and numerical evidence (e.g.\ \cite{benzi1993extended}) of measurable discrepancies  from the scaling laws \eqref{e:scaling_laws}  (cf.\  \cite{frisch1980fully}).  In this direction, Mandelbrot conjectured \cite{mandelbrot1976turbulence} that at the inviscid limit, turbulence concentrates (\emph{in space}) on a fractal set of Hausdorff dimension strictly less than 3.  

It is interesting to note that the solutions constructed in order to prove the above theorem have a fractal structure in \emph{time}: namely, the set of times for which $v$ is \emph{not} H\"older $C^{1/3-\eps}$ continuous is  contained in a Cantor-like set with Hausdorff dimension strictly less than 1.  Since the phenomena observed does not relate to the structure functions from which intermittency was originally postulated it is clearly far-fetched to label such a phenomena as intermittency.  Nevertheless, it is the opinion of the author that the parallels to the notion of intermittency  remain of interest.

\subsection{Euler-Reynolds system and the convex integration scheme}\label{s:setup}
In order to prove Theorem \ref{t:main} we construct an iteration scheme in the style of \cite{BDIS}, which is itself based on the schemes presented in \cite{DS3,DS4}. At each step $q\in \N$ we construct a triple $(v_q, p_q, \mathring{R}_q)$ solving the Euler-Reynolds system (see Definition 2.1 in \cite{DS3}):
\begin{equation}\label{e:euler_reynolds}
\left\{\begin{array}{l}
\partial_t v_q + \div (v_q\otimes v_q) + \nabla p_q =\div\mathring{R}_q\\ \\
\div v_q = 0\, .
\end{array}\right.
\end{equation} 

The initial triple $(v_0,p_0,\mathring{R}_0)$  will be non-trivial with compact support in time; all triples thereafter will be defined inductively as perturbations of the proceeding triples.  The perturbation 
$$
w_q:=v_{q}-v_{q-1},
$$
will be composed of weakly interacting perturbed Beltrami flows (see Section \ref{s:prelim}) oscillating at \emph{frequency} $\lambda_q$, defined in such a way to correct for the previous Reynolds error $\mathring{R}_{q-1}$.

In order to ensure convergence of the sequence $v_q$ to a continuous weak $C^{\sfrac15-\eps}$ solution of Euler, we will require the following estimates to be satisfied
\begin{align}
\|w_{q}\|_0 + \frac{1}{\lambda_q}\|\partial_t w_{q}\|_0 + \frac{1}{\lambda_q}\|w_{q}\|_1 &\leq \lambda_q^{-\sfrac15+\eps_0} \label{e:v_iter}\\
\norm{p_q-p_{q-1}}_0+ \frac{1}{\lambda_q}\norm{\partial_t(p_q-p_{q-1})}_0+ \frac{1}{\lambda_q^2}\norm{p_q-p_{q-1}}_2 &\leq \lambda_q^{-\sfrac25+2\eps_0} \label{e:p_iter}\\
\norm{\mathring R_q}_0+\frac{1}{\lambda_q}\norm{\mathring R_q}_1&\leq \lambda_{q+1}^{-\sfrac25+2\eps_0} \label{e:R_iter}
\end{align}
for some $\eps_0>0$ strictly smaller than $\eps$.  Here and throughout the article, $\norm{\cdot}_\beta$ for $\beta=m+\kappa$, $\beta\in \N$ and $\kappa\in [0,1)$ will denote the usual \emph{spatial} H\"older $C^{m,\kappa}$ norm.  As a minor point of deviation from \cite{BDIS}, we keep track of second order spatial derivative estimates of $p_q-p_{q-1}$, whereas in \cite{BDIS} first order estimates -- which in the present work are implicit by interpolation -- were sufficient.  These second order estimates will be used in order to obtain slightly improved bounds on the Reynolds stress (see Section \ref{s:reynolds}).  

It is perhaps worth noting that aside from the second order estimate on $p_q-p_{q-1}$, up to a constant multiple, the above estimates are consistent with the estimates given in \cite{BDIS}.\footnote{In \cite{BDIS} the estimates corresponding to \eqref{e:v_iter}-\eqref{e:R_iter} are written in terms of a sequence of parameters $\delta_q$ which in the context of the present paper are defined to be $\delta_q:=\lambda_q^{-\sfrac25+2\eps_0}$ (cf.\ Section \ref{s:ordering} and Section \ref{s:conclusion}).} 

In order to ensure that our sequence convergences to a non-trivial solution, we will impose the addition requirement that
\begin{equation}\label{e:nontrivial_req}
\sum_{q=1}^{\infty} \norm{w_q}_0 < \frac{1}{2}\norm{v_0}_0,
\end{equation}
for times $t\in [-\sfrac18,\sfrac18]$.

The principle new idea of this work is that in addition to the estimates given above, we will keep track of sharper, time localized estimates.  As a consequence of these sharper estimates, it can be shown that for any given time $t\in(-1,1)$ outside a prescribed set $\Omega$ of Hausdorff dimension strictly less than $1$,  there exists a $N=N(t)$ such that  
\begin{align}
\|w_{q}\|_0 + \frac{1}{\lambda_q}\|\partial_t w_{q}\|_0 + \frac{1}{\lambda_q}\|w_{q}\|_1 &\leq \lambda_q^{-\sfrac13+\eps_0}\label{e:sharp1} \\
\norm{p_q-p_{q-1}}_0+ \frac{1}{\lambda_q}\norm{\partial_t(p_q-p_{q-1})}_0+ \frac{1}{\lambda_q^2}\norm{p_q-p_{q-1}}_2 &\leq \lambda_q^{-\sfrac23+2\eps_0} \\
\norm{\mathring R_q}_0+\frac{1}{\lambda_q}\norm{\mathring R_q}_1&\leq \lambda_{q+1}^{-\sfrac23+2\eps_0}, \label{e:sharp3}\footnotemark
\end{align}
for\footnotetext{Here and throughout the paper we suppress the dependence on the time variable $t$.} every $q\geq N$. 

\subsection{The main iteration proposition and the proof of Theorem \ref{t:main}}

\begin{proposition}\label{p:iterate}
For every small $\eps_0>0$, there exists an $\alpha>1$, $d<1$ and a sequence of parameters $\lambda_0,\lambda_1,\dots$ satisfying $\sfrac12 \lambda_0^{\alpha^q}<\lambda_q<2\lambda_0^{\alpha^q}$ such that the following holds.  A sequence of triples $(v_q, p_q, \mathring{R}_q)$ can be constructed with temporal support confined to $[-\sfrac12,\sfrac12]$ solving \eqref{e:euler_reynolds} and satisfying the estimates (\ref{e:v_iter}-\ref{e:nontrivial_req}).  Moreover, for any $\delta>0$, there exists an integer $M$ such that if $\Xi^M$ denotes the set of times $t$ such that there exists a $q\geq M$ satisfying either
\begin{equation}
\begin{split}\label{e:onsager_est}
\|w_{q}\|_0 +  \frac{1}{\lambda_q}\|w_{q}\|_1 &> \lambda_q^{-\sfrac13+\eps_0},\ \text{ or}\\
\norm{p_q-p_{q-1}}_0+  \frac{1}{\lambda_q}\norm{p_q-p_{q-1}}_1 &> \lambda_q^{-\sfrac23+2\eps_0},
\end{split}
\end{equation}
then there exists a cover of $\Xi^M$ consisting of a sequence of balls of radius $r_i$ such that
\begin{equation}\label{e:real_Haus}
\sum r_i^d < \delta.
\end{equation}
\end{proposition}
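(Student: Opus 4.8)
The plan is to run the Buckmaster--De Lellis--Sz\'ekelyhidi convex integration scheme with two layers of estimates in play simultaneously: the "global" estimates \eqref{e:v_iter}--\eqref{e:R_iter} with exponent $\sfrac15-\eps_0$, which are enforced at every step and for every time, and the "sharp" estimates \eqref{e:sharp1}--\eqref{e:sharp3} with the Onsager exponent $\sfrac13-\eps_0$, which one only asks to hold at a given time $t$ once $q$ is large enough (depending on $t$). The key structural point one must install is that the \emph{sharp} estimate \eqref{e:R_iter}-type bound on $\mathring R_{q}$ is \emph{inherited}: if at time $t$ the Reynolds stress $\mathring R_{q-1}$ already satisfies the sharp bound $\lambda_q^{-\sfrac23+2\eps_0}$, then the standard one-step construction (perturbation by $w_q$ correcting $\mathring R_{q-1}$, with a suitable gluing/mollification in the style of \cite{BDIS}) produces $w_q$ satisfying \eqref{e:sharp1}, $p_q-p_{q-1}$ satisfying the sharp pressure estimate, and $\mathring R_q$ satisfying \eqref{e:sharp3}. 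Here the extra second-order control on $p_q-p_{q-1}$ mentioned in Section \ref{s:setup} is exactly what one needs to push the new Reynolds stress below $\lambda_{q+1}^{-\sfrac23+2\eps_0}$ rather than merely $\lambda_{q+1}^{-\sfrac25+2\eps_0}$; this is the heart of the iteration and I would carry it out in Sections \ref{s:prelim}--\ref{s:reynolds}.

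Granting this inheritance property, the set $\Xi^M$ is controlled by a \emph{stopping-time / branching} argument, which is the combinatorial core of the proposition. Define, for each $q$, the "bad set at level $q$" $B_q\subset(-1,1)$ to be the set of times at which the sharp estimate \eqref{e:onsager_est} fails at step $q$ while holding at step $q-1$; by the inheritance property, once a time leaves the bad regime it stays good, so $\Xi^M\subset\bigcup_{q\ge M}B_q$ and in fact the $B_q$ are essentially nested in a Cantor-like fashion. The point is that $B_q$ is small: because the perturbation $w_q$ oscillates at frequency $\lambda_q$ and the \emph{global} bound $\|w_q\|_0\lesssim\lambda_q^{-\sfrac15+\eps_0}$ forces the temporal support of the "large" part of $w_q$ to shrink — quantitatively, the set of times where the sharp bound can fail has measure (indeed can be covered by intervals of total length) $\lesssim \lambda_q^{-\gamma}$ for some fixed gap exponent $\gamma>0$ coming from $\sfrac13-\sfrac15$ — one gets $|B_q|\lesssim \lambda_q^{-\gamma}$, and moreover $B_q$ is covered by $O(\lambda_q^{\,?})$ intervals each of length $\lambda_q^{-1}$ (the natural time scale). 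Then $\sum_{B_q\text{-intervals}} r_i^d \lesssim \lambda_q^{(\text{count exponent})}\cdot\lambda_q^{-d}$, and since $\lambda_q\sim\lambda_0^{\alpha^q}$ grows super-exponentially, choosing $d<1$ sufficiently close to $1$ and $M$ sufficiently large makes $\sum_{q\ge M}$ of these contributions a convergent series summing to less than $\delta$. This is where the relation $1-d>C\eps^2$ in the footnote of Theorem \ref{t:main} is pinned down: the gap exponent $\gamma$ is itself $O(\eps_0)$ (really $O(\eps)$), so to beat the logarithmic-of-frequency counting factor one can only take $d$ within $O(\eps^2)$ of $1$.

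Concretely I would proceed in this order: (1) fix the parameter hierarchy — choose $\alpha>1$ close to $1$, $b$ (the exponent in $\lambda_{q+1}=\lambda_q^b$) and $d<1$ in terms of $\eps_0$, as in Section \ref{s:ordering}; (2) construct $(v_0,p_0,\mathring R_0)$ nontrivial, compactly supported in time inside $[-\tfrac18,\tfrac18]$ with $\|\mathring R_0\|_0$ small enough to start both iterations; (3) state and prove the one-step Proposition giving $(v_{q+1},p_{q+1},\mathring R_{q+1})$ from $(v_q,p_q,\mathring R_q)$, with \emph{both} the global estimates \eqref{e:v_iter}--\eqref{e:R_iter} (unconditionally) and the sharp estimates \eqref{e:sharp1}--\eqref{e:sharp3} (conditionally on the sharp bound for $\mathring R_q$ at that time), tracking the time-support of the part of $w_{q+1}$ that violates the sharp bound; (4) iterate, and at each $q$ record the bad set $B_{q+1}$ of times at which \eqref{e:onsager_est} first fails, together with an explicit cover by intervals at scale $\lambda_{q+1}^{-1}$; (5) assemble the covers for $q\ge M$, bound $\sum r_i^d$ by a geometric-type series in $q$ using $\lambda_q\sim\lambda_0^{\alpha^q}$, and choose $M$ so the sum is $<\delta$.

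\textbf{Main obstacle.} The genuinely delicate step is (3), specifically proving the sharp Reynolds-stress inheritance: the new stress $\mathring R_{q+1}$ splits into a transport/oscillation error, a Nash error, and a "Reynolds"/stress error, and each must be estimated against $\lambda_{q+2}^{-\sfrac23+2\eps_0}$ rather than the weaker global threshold. This is exactly why one carries the second-order spatial estimate on $p_q-p_{q-1}$ (it feeds into the transport and corrector errors via the advective derivative $\partial_t+v_q\!\cdot\!\nabla$), and getting all the exponents to close at the Onsager level requires the mollification/gluing step of \cite{BDIS} to be performed carefully at time scale $\sim \lambda_q^{-1}$ and the estimates to be propagated with the sharper constants. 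The combinatorial step (4)--(5) is, by comparison, bookkeeping once the counting exponent for $B_q$ is extracted — but one does need to be sure that "good stays good," i.e. that the event defining $B_q$ is genuinely a one-time transition and not recurrent, which again rests on the inheritance property from step (3).
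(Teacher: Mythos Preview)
Your high-level architecture is right --- run the \cite{BDIS} scheme with a two-tier set of estimates (global $\sfrac15-\eps_0$ and time-localized $\sfrac13-\eps_0$), identify a bad set of times where the sharp tier fails, and bound its Hausdorff content. But the mechanism you propose for \emph{why the bad set is small} is not the one that works, and as stated your version has a genuine gap.

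You write that the global bound $\|w_q\|_0\lesssim\lambda_q^{-\sfrac15+\eps_0}$ ``forces the temporal support of the large part of $w_q$ to shrink,'' with measure $\lesssim\lambda_q^{-\gamma}$ for a gap exponent $\gamma$ coming from $\sfrac13-\sfrac15$, and that $B_q$ is covered by intervals of length $\lambda_q^{-1}$. None of this follows. The amplitude of $w_q$ is $\sqrt{\rho_l}$ with $\rho_l\sim\|\mathring R(\cdot,l\mu^{-1})\|_0$, and absent a sharp bound on $\mathring R_{q-1}$ this is $\sim\lambda_q^{-\sfrac15+\eps_0}$ \emph{uniformly in time}; a global $C^0$ bound carries no information about the measure of the set where the much stronger inequality $\|w_q\|_0\leq\lambda_q^{-\sfrac13+\eps_0}$ holds. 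There is no Chebyshev-type argument available here.

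The actual mechanism is structural. The time cutoffs $\chi_l$ are \emph{modified} from \cite{BDIS} so that the overlap regions (where $\chi'_l\neq 0$) have width $\sim\lambda_{q+1}^{-\eps_1}\mu^{-1}$ rather than $\sim\mu^{-1}$, with $\eps_1=\eps_0^2/18$. Outside these overlaps $\chi'_l\equiv 0$, which kills precisely the $\mu$-dependent error terms in the new Reynolds stress that otherwise cap the scheme at $\sfrac15$; this is the content of the sharp time-localized Reynolds estimate (Proposition~\ref{p:R}), valid only for $|t\mu-l_{q+1}|<\tfrac12(1-\lambda_{q+1}^{-\eps_1})$. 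The bad set at level $q$ is then \emph{by construction} the union $U^{(q)}$ of $\sim\mu_q$ intervals of radius $\mu_q^{-1}\lambda_q^{-\eps_1}$ (not $\lambda_q^{-1}$), and the Hausdorff bound follows from $\sum_{q'\geq q}\mu_{q'}^{1-d}\lambda_{q'}^{-d\eps_1}\to 0$. Your inheritance step (3) is also more delicate than you suggest: even granted the sharp bound on $\mathring R_q$ at $t_0$, the sharp bound on $\mathring R_{q+1}$ is only recovered \emph{if $t_0$ avoids the overlap region at level $q+1$}. This is why the paper does not use a single stopping time but instead introduces interpolating time-localized amplitudes $\delta_{q,t_0}$ that descend gradually from $\lambda_q^{-\sfrac25+2\eps_0}$ to $\lambda_q^{-\sfrac23+2\eps_0}$ once $t_0$ has exited $V^{(N)}=\bigcup_{q'\geq N}U^{(q')}$.
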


\begin{proof}[Proof of Theorem \ref{t:main}]

Fix $\eps_0=\sfrac{\eps}{2}$ and let $(v_q, p_q,\mathring{R}_q)$  be a sequence as in Proposition \ref{p:iterate}.
It follows then easily that $(v_q, p_q)$ converge uniformly to a pair of continuous functions $(v,p)$ satisfying
\eqref{eulereq}, having compact temporal support. Moreover, by interpolating the inequalities \eqref{e:v_iter} and \eqref{e:p_iter} we obtain that $v_q$ converges in $C^{\sfrac15-\eps}$ and $p_q$ in $C^{\sfrac25-2\eps}$.

In order to prove (ii) we first fix $\delta>0$ and let $M$ and $\Xi^M$ be as in Proposition \ref{p:iterate}. Hence by assumption if $t\notin \Xi^M$
\begin{equation}
\begin{split}
\|w_{q}\|_0 +  \frac{1}{\lambda_q}\|w_{q}\|_1 &\leq \lambda_q^{-\sfrac13+\eps_0}\\
\norm{p_q-p_{q-1}}_0+  \frac{1}{\lambda_q}\norm{p_q-p_{q-1}}_1 &\leq \lambda_q^{-\sfrac23+2\eps_0},
\end{split}
\end{equation}
for all $q\geq M$.  Thus interpolating the inequalities above we obtain that $v-v_M$ is bounded in $C^{\sfrac13-\eps}$ and $p-p_M$ in $C^{\sfrac23-2\eps}$.  By \eqref{e:v_iter} and \eqref{e:p_iter}, the pair $(v_M,p_M)$ are $C^1$ bounded and thus it follows that $v$ and $p$ are bounded in $C^{\sfrac13-\eps}$ and $C^{\sfrac23-2\eps}$ respectively.  Letting $\delta$ tend to zero we obtain our claim.
\end{proof}

\subsection{Plan of the paper}

After recalling in Section \ref{s:prelim} some preliminary notation from the paper \cite{DS3}, in Section \ref{s:const_triple} we give the precise definition of the sequence of triples $(v_{q}, p_{q}, \mathring{R}_{q})$. In Section \ref{s:ordering} we list a number of inequalities that we will require on the various parameters of our scheme. The Sections \ref{s:perturbation_estimates} and \ref{s:reynolds} will focus on estimating, respectively, $w_{q+1} =v_{q+1}-v_q$, and $\mathring{R}_{q+1}$.
These estimates are then collected in Section \ref{s:conclusion} where Proposition \ref{p:iterate} will be finally proved. Throughout the entire article we will rely heavily on the arguments of \cite{BDIS} -- in some sense the scheme presented here is a simple variant of that given in \cite{BDIS} -- as such the present paper is intentionally structured in a similar manner to \cite{BDIS} in order to aide comparison.

\subsection{Acknowledgments}
I wish to thank Camillo De Lellis and L\'aszl\'o Sz\'ekelyhidi Jr.\ for the enlightening discussions I had with them both.  I would also like to thank Antoine Choffrut, Camillo De Lellis, Charles Doering and L\'aszl\'o Sz\'ekelyhidi Jr.\ for their helpful comments and corrections regarding the manuscript. In addition, I would like to express my gratitude to the anonymous referee for his/her detailed comments and corrections.

This work is supported as part of the ERC Grant Agreement No. 277993.

\section{Preliminaries}\label{s:prelim}

Throughout this paper we denote the $3\times 3$ identity matrix by $\Id$.   In this section we state a number of results found in \cite{DS3} which are fundamental to the present scheme as well its predecessors \cite{DS3,DS4,BDIS}.

\subsection{Geometric preliminaries}

The following two results will form the cornerstone in which to construct the highly oscillating flows required by our scheme. 

\begin{proposition}[Beltrami flows]\label{p:Beltrami}
Let $\bar\lambda\geq 1$ and let $A_k\in\R^3$ be such that 
$$
A_k\cdot k=0,\,|A_k|=\tfrac{1}{\sqrt{2}},\,A_{-k}=A_k
$$
for $k\in\Z^3$ with $|k|=\bar\lambda$.
Furthermore, let 
$$
B_k=A_k+i\frac{k}{|k|}\times A_k\in\C^3.
$$
For any choice of $a_k\in\C$ with $\overline{a_k} = a_{-k}$ the vector field
\begin{equation}\label{e:Beltrami}
W(\xi)=\sum_{|k|=\bar\lambda}a_kB_ke^{ik\cdot \xi}
\end{equation}
is real-valued, divergence-free and satisfies
\begin{equation}\label{e:Bequation}
\div (W\otimes W)=\nabla\frac{|W|^2}{2}.
\end{equation}
Furthermore
\begin{equation}\label{e:av_of_Bel}
\langle W\otimes W\rangle= \fint_{\T^3} W\otimes W\,d\xi = \frac{1}{2} \sum_{|k|=\bar\lambda} |a_k|^2 \left( \Id - \frac{k}{|k|}\otimes\frac{k}{|k|}\right)\, .  
\end{equation}
\end{proposition}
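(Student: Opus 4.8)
The plan is to verify the four assertions of the proposition in turn; the crux is the observation that each summand $B_k e^{ik\cdot\xi}$ is a divergence-free eigenfunction of the curl operator with eigenvalue $|k|=\bar\lambda$. First I would record two elementary identities: $k\cdot B_k=0$ and $B_{-k}=\overline{B_k}$. The former follows from the hypothesis $k\cdot A_k=0$ together with $k\cdot(k\times A_k)=0$; the latter from $A_{-k}=A_k$ and $\tfrac{-k}{|-k|}=-\tfrac{k}{|k|}$. From $k\cdot B_k=0$ one reads off $\div W=\sum_{|k|=\bar\lambda}a_k\,(ik\cdot B_k)\,e^{ik\cdot\xi}=0$. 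For reality, I would compute $\overline{W}=\sum_{|k|=\bar\lambda}\overline{a_k}\,\overline{B_k}\,e^{-ik\cdot\xi}=\sum_{|k|=\bar\lambda}a_{-k}\,B_{-k}\,e^{-ik\cdot\xi}$, using $\overline{a_k}=a_{-k}$, and then reindex $k\mapsto-k$ to obtain $\overline{W}=W$.

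Next I would establish the Beltrami property. Since $\curl\bigl(B_k e^{ik\cdot\xi}\bigr)=(ik\times B_k)\,e^{ik\cdot\xi}$, it suffices to show $ik\times B_k=|k|B_k$. Expanding and using the double cross-product identity with $k\cdot A_k=0$, one has $k\times(k\times A_k)=-|k|^2 A_k$, hence $ik\times B_k=ik\times A_k-\tfrac1{|k|}k\times(k\times A_k)=ik\times A_k+|k|A_k=|k|\bigl(A_k+i\tfrac{k}{|k|}\times A_k\bigr)=|k|B_k$. As all $|k|=\bar\lambda$, this gives $\curl W=\bar\lambda W$. Combining $\div W=0$ with the vector identity $(W\cdot\nabla)W=\nabla\tfrac{|W|^2}{2}-W\times\curl W$ and noting $W\times\curl W=\bar\lambda\,W\times W=0$, we conclude $\div(W\otimes W)=(W\cdot\nabla)W=\nabla\tfrac{|W|^2}{2}$, which is \eqref{e:Bequation}.

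For the average \eqref{e:av_of_Bel}, I would expand $W\otimes W=\sum_{k,k'}a_k a_{k'}\,B_k\otimes B_{k'}\,e^{i(k+k')\cdot\xi}$ and integrate over $\T^3$: only the terms with $k'=-k$ survive, so $\ip{W\otimes W}=\sum_{|k|=\bar\lambda}a_k a_{-k}\,B_k\otimes B_{-k}=\sum_{|k|=\bar\lambda}|a_k|^2\,B_k\otimes\overline{B_k}$, using $a_{-k}=\overline{a_k}$ and $B_{-k}=\overline{B_k}$. Since the index set is invariant under $k\mapsto-k$, $|a_{-k}|^2=|a_k|^2$ and $B_{-k}\otimes\overline{B_{-k}}=\overline{B_k}\otimes B_k$, one may replace the summand $B_k\otimes\overline{B_k}$ by its symmetrization $\tfrac12\bigl(B_k\otimes\overline{B_k}+\overline{B_k}\otimes B_k\bigr)=A_k\otimes A_k+\bigl(\tfrac{k}{|k|}\times A_k\bigr)\otimes\bigl(\tfrac{k}{|k|}\times A_k\bigr)$, the imaginary parts cancelling. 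Finally, since $|A_k|=\tfrac1{\sqrt2}$ and $A_k\cdot k=0$, the triple $\bigl\{\sqrt2\,A_k,\ \sqrt2\,\tfrac{k}{|k|}\times A_k,\ \tfrac{k}{|k|}\bigr\}$ is an orthonormal basis of $\R^3$, so its completeness relation gives $A_k\otimes A_k+\bigl(\tfrac{k}{|k|}\times A_k\bigr)\otimes\bigl(\tfrac{k}{|k|}\times A_k\bigr)=\tfrac12\bigl(\Id-\tfrac{k}{|k|}\otimes\tfrac{k}{|k|}\bigr)$; inserting this into the sum yields \eqref{e:av_of_Bel}.

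The only genuinely delicate point is the bookkeeping in this last step: one must check that the antisymmetric (imaginary) part of $B_k\otimes\overline{B_k}$ cancels under summation, which is precisely what the hypothesis $\overline{a_k}=a_{-k}$ — equivalently, reality of $W$ — guarantees. Everything else reduces to short, direct computations with the cross product and the completeness relation for an orthonormal frame.
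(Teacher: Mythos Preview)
Your proof is correct and complete; each of the four assertions is verified by the expected direct computation, and the symmetrization argument for \eqref{e:av_of_Bel} is handled cleanly via the completeness relation for the orthonormal frame $\{\sqrt{2}A_k,\sqrt{2}\tfrac{k}{|k|}\times A_k,\tfrac{k}{|k|}\}$. The paper itself does not supply a proof of this proposition: it is quoted as a preliminary result from \cite{DS3}, so there is no in-paper argument to compare against, but your approach is the standard one and matches what appears in that reference.
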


\begin{lemma}[Geometric Lemma]\label{l:split}
For every $N\in\N$ we can choose $r_0>0$ and $\bar{\lambda} > 1$ with the following property.
There exist pairwise disjoint subsets 
$$
\Lambda_j\subset\{k\in \Z^3:\,|k|=\bar{\lambda}\} \qquad j\in \{1, \ldots, N\}
$$
and smooth positive functions 
\[
\gamma^{(j)}_k\in C^{\infty}\left(B_{r_0} (\Id)\right) \qquad j\in \{1,\dots, N\}, ~k\in\Lambda_j,~\footnotemark
\]
such \footnotetext{Here $B_{r_0} (\Id)$ denotes the ball around $\Id$ of radius $r_0$ under the usual matrix operator norm $|A|:=\max_{|v|=1}|Av|$.}that
\begin{itemize}
\item[(a)] $k\in \Lambda_j$ implies $-k\in \Lambda_j$ and $\gamma^{(j)}_k = \gamma^{(j)}_{-k}$;
\item[(b)] For each $R\in B_{r_0} (\Id)$ we have the identity
\begin{equation}\label{e:split}
R = \frac{1}{2} \sum_{k\in\Lambda_j} \left(\gamma^{(j)}_k(R)\right)^2 \left(\Id - \frac{k}{|k|}\otimes \frac{k}{|k|}\right) 
\qquad \forall R\in B_{r_0}(\Id)\, .
\end{equation}
\end{itemize}
\end{lemma}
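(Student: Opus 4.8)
The plan is to isolate a linear-algebraic core, deduce the smooth coefficients by an elementary selection argument, and finally realise everything on a single lattice sphere using the distribution of integer points on spheres. \emph{Linear algebra.} The key point is that $\Id$ lies in the \emph{interior} of the convex cone $C$ generated by $\{\Id - e\otimes e : e\in S^2\}$ inside the space $\mathrm{Sym}_3$ of symmetric $3\times3$ matrices. Indeed, these matrices span $\mathrm{Sym}_3$ — the $e\otimes e$ already do, by polarisation, so their differences span the trace-free part while a single $\Id - e\otimes e$ supplies the missing trace — so $C$ is full-dimensional; and if $\Id$ were a boundary point there would be a nonzero $H\in\mathrm{Sym}_3$ with $\tr H=\langle H,\Id\rangle=0$ and $\langle H,\Id - e\otimes e\rangle\ge0$ for all $e$, i.e. $e^\top He\le0$ for all $e$, forcing $H$ to be negative semidefinite with zero trace, hence $H=0$, a contradiction. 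Since $\Id\in\mathrm{int}\,C$, there is a basis $F_1,\dots,F_6$ of $\mathrm{Sym}_3$ with $\Id\pm F_l\in C$; writing each $\Id\pm F_l$ as a finite positive combination of matrices $\Id - e\otimes e$ and collecting the finitely many directions that occur, we obtain a symmetric finite set $\{\pm e_1,\dots,\pm e_m\}\subset S^2$ — which we may take to have rational coordinates, the relevant property being open and rational directions being dense in $S^2$ — such that $\Id$ lies in the interior of $\mathrm{cone}\{\Id - e_i\otimes e_i\}$; moreover this persists under perturbing each $e_i$ within some fixed radius $\rho>0$.

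\emph{Smooth coefficients.} Fix a representation $\Id=\sum_{i=1}^m \bar c_i\,(\Id - e_i\otimes e_i)$ with all $\bar c_i>0$. For $R\in\mathrm{Sym}_3$ the equation $\sum_i c_i(\Id - e_i\otimes e_i)=R$ has an affine space of solutions depending affinely on $R$; choosing the affine (hence $C^\infty$) selection $R\mapsto(c_i(R))$ that specialises to $(\bar c_i)$ at $R=\Id$, continuity gives $r_0>0$ with $c_i(R)>0$ on $B_{r_0}(\Id)$. Once the directions $e_i$ are realised as $k/|k|$ for lattice vectors $k$ on a common sphere $\{|k|=\bar\lambda\}$, putting $\gamma^{(j)}_k(R)$ proportional to $\sqrt{c_i(R)}$ for the $k$ in directions $\pm e_i$ — after the harmless symmetrisation $\gamma_k=\gamma_{-k}:=(\tfrac12(\gamma_k^2+\gamma_{-k}^2))^{1/2}$, legitimate since $\Id-\tfrac{k}{|k|}\otimes\tfrac{k}{|k|}=\Id-\tfrac{-k}{|k|}\otimes\tfrac{-k}{|k|}$, and adjusting the constant factor to absorb the $\tfrac12$ — yields smooth positive functions on $B_{r_0}(\Id)$ satisfying (a) and \eqref{e:split}.

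\emph{One lattice sphere, $N$ disjoint families, and the main obstacle.} It remains, for each $j\le N$, to choose a perturbation $\{e_i^{(j)}\}_{i\le m}$ of $\{e_i\}$ within $\rho$ so that all the $e_i^{(j)}$ (for $i\le m$, $j\le N$) are pairwise distinct and each equals $k/|k|$ for some $k$ on one fixed sphere $\{|k|=\bar\lambda\}$; then $\Lambda_j:=\{\pm k_i^{(j)}\}_{i\le m}$ (note $-k$ has the same norm) are pairwise disjoint and contained in that sphere, and Steps 1–2 applied to the perturbed directions finish the proof. For this I would invoke the classical fact that for every $\rho'>0$ there are arbitrarily large $\bar\lambda$ for which $\{k/\bar\lambda : k\in\Z^3,\ |k|=\bar\lambda\}$ is $\rho'$-dense in $S^2$; taking $\rho'$ below both $\rho$ and the separation of $Nm$ prescribed disjoint caps around the $e_i$, one selects one admissible lattice direction in each cap. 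This arithmetic input — the distribution of integer points on large spheres — is the only non-soft ingredient and is, I expect, the main obstacle; the linear algebra and the selection argument are robust and routine.
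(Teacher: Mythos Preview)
The paper does not actually prove this lemma: it is stated in the preliminaries as a result taken from \cite{DS3}, with no argument given. So there is nothing in the present paper to compare your proof against.

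That said, your argument is correct and is close in spirit to the original proof in \cite{DS3}: the convex-geometric fact that $\Id$ lies in the interior of the cone generated by $\{\Id-e\otimes e:e\in S^2\}$ in $\mathrm{Sym}_3$, an affine (hence smooth) selection of positive coefficients near $\Id$, and then realisation of the finitely many required directions on a common lattice sphere. The small imprecision in Step~2 (``the'' affine selection is not unique) is harmless --- any affine right inverse of the surjection $c\mapsto\sum_i c_i(\Id-e_i\otimes e_i)$, translated to hit $\bar c$ at $R=\Id$, does the job --- and taking the minimum of the $N$ resulting radii gives a uniform $r_0$.

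The one place you overshoot is your ``main obstacle''. Having already arranged for the $e_i$ to be rational points on $S^2$, you do not need equidistribution of lattice points on spheres (Duke--Iwaniec) at all. A rational point on $S^2$ in lowest terms is $(a/d,b/d,c/d)$ with $a^2+b^2+c^2=d^2$, i.e.\ $p/|p|$ with $p=(a,b,c)\in\Z^3$ and $|p|=d\in\N$. For Step~3, simply choose $Nm$ \emph{distinct} rational directions $e_i^{(j)}=p_i^{(j)}/q_i^{(j)}$ with $q_i^{(j)}=|p_i^{(j)}|$, each within $\rho$ of $e_i$ (rational directions are dense on $S^2$), set $\bar\lambda:=\mathrm{lcm}\{q_i^{(j)}\}$, and put $k_i^{(j)}:=(\bar\lambda/q_i^{(j)})\,p_i^{(j)}\in\Z^3$; then $|k_i^{(j)}|=\bar\lambda$ for all $i,j$, and $\bar\lambda>1$ as soon as any $q_i^{(j)}>1$. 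This is entirely elementary, so the lemma has no hard arithmetic input once you exploit the rationality you already built in.
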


\subsection{The operator $\mathcal{R}$} The following operator will be used in order to deal the the Reynolds Stresses arising from our iteration scheme.

\begin{definition}\label{d:reyn_op}
Let $v\in C^\infty (\T^3, \R^3)$ be a smooth vector field. 
We then define $\RR v$ to be the matrix-valued periodic function
\begin{equation*}
\RR v:=\frac{1}{4}\left(\nabla\P u+(\nabla\P u)^T\right)+\frac{3}{4}\left(\nabla u+(\nabla u)^T\right)-\frac{1}{2}(\div u) \Id,
\end{equation*}
where $u\in C^{\infty}(\T^3,\R^3)$ is the solution of
\begin{equation*}
\Delta u=v-\fint_{\T^3}v\textrm{ in }\T^3
\end{equation*}
with $\fint_{\T^3} u=0$ and $\P$ is the Leray projection onto divergence-free fields with zero average.
\end{definition}

\begin{lemma}[$\RR=\textrm{div}^{-1}$]\label{l:reyn}
For any $v\in C^\infty (\T^3, \R^3)$ we have
\begin{itemize}
\item[(a)] $\RR v(x)$ is a symmetric trace-free matrix for each $x\in \T^3$;
\item[(b)] $\div \RR v = v-\fint_{\T^3}v$.
\end{itemize}
\end{lemma}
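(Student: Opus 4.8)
The plan is to verify (a) and (b) by direct computation; the only non-elementary inputs are the identity $\P = \Id - \nabla \Delta^{-1} \div$ for the Leray projection on mean-zero periodic fields (so that in particular $\div \P u = 0$), together with the defining relation $\Delta u = v - \fint_{\T^3} v$.

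For part (a), symmetry is immediate, since each of the three summands in the definition of $\RR v$ is manifestly a symmetric matrix. For the trace-free property I would compute the trace of each summand separately: $\tr\bigl(\nabla \P u + (\nabla \P u)^T\bigr) = 2\,\div \P u = 0$ because $\P u$ is divergence-free; $\tr\bigl(\nabla u + (\nabla u)^T\bigr) = 2\,\div u$; and $\tr\bigl((\div u)\Id\bigr) = 3\,\div u$. Summing with the prescribed weights $\tfrac14$, $\tfrac34$, $-\tfrac12$ gives $\tfrac32 \div u - \tfrac32 \div u = 0$, so $\RR v$ is trace-free.

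For part (b), writing $(\div M)_i = \partial_j M_{ij}$ for a matrix field $M$ and computing componentwise, one gets the identities $\div\bigl(\nabla w + (\nabla w)^T\bigr) = \Delta w + \nabla \div w$ for any vector field $w$, and $\div\bigl((\div u)\Id\bigr) = \nabla \div u$. Applying the first identity with $w = \P u$ (using $\div \P u = 0$) and with $w = u$, one finds that $\div \RR v$ equals $\tfrac14 \Delta \P u + \tfrac34 \bigl(\Delta u + \nabla \div u\bigr) - \tfrac12 \nabla \div u$. Finally, since $\Delta$ commutes with $\nabla \Delta^{-1}\div$, the identity $\P = \Id - \nabla \Delta^{-1}\div$ yields $\Delta \P u = \Delta u - \nabla \div u$, and the whole expression collapses to $\Delta u = v - \fint_{\T^3} v$, which is (b).

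There is no genuine analytic obstacle here; the statement is a verification. The only thing requiring care is the bookkeeping of which terms are pure gradients of scalars — these are the terms annihilated by $\P$ inside the first summand, yet reappearing as $\nabla \div u$ contributions after the outer divergence — and checking that the specific coefficients $\tfrac14$, $\tfrac34$, $-\tfrac12$ are exactly the choice that makes all such gradient terms cancel, leaving precisely $\Delta u$.
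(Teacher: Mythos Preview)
Your proof is correct; the computations for both (a) and (b) check out, and the key point --- that the particular coefficients $\tfrac14$, $\tfrac34$, $-\tfrac12$ are engineered so that the gradient terms $\nabla\div u$ cancel after taking the divergence --- is identified and handled properly. The paper itself does not prove this lemma: it is stated as a preliminary result imported from \cite{DS3}, so there is no proof in the paper to compare against, and your direct verification is exactly the standard argument one would supply.
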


\subsection{Schauder and commutator estimates on $\mathcal R$}

We recall the following Schauder estimates (Proposition G.1 (ii), Appendix G of \cite{BDIS}) and commutator estimates (Proposition H.1 Appendix H of \cite{BDIS}).

\begin{proposition}\label{p:stat_phase}
Let $k\in\Z^3\setminus\{0\}$ be fixed. For a smooth vector field $a\in C^{\infty}(\T^3;\R^3)$ let 
$F(x):=a(x)e^{i\lambda k\cdot x}$. Then we have
\begin{equation}\label{e:R(F)}
\|\RR(F)\|_{\alpha}\leq \frac{C}{\lambda^{1-\alpha}}\|a\|_0+\frac{C}{\lambda^{m-\alpha}}[a]_m+\frac{C}{\lambda^m}[a]_{m+\alpha},
\end{equation}
for  $m=0,1,2,\dots$ and $\alpha\in (0,1)$, where $C=C(\alpha,m)$.
\end{proposition}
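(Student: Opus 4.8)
\emph{Strategy.} The plan is to exploit that $\RR$ is a constant-coefficient Fourier multiplier of order $-1$ and to commute the oscillatory factor $e^{i\lambda k\cdot x}$ through it by a stationary-phase expansion. Writing the Leray projection as $\P=\Id-\nabla\Delta^{-1}\div$, one reads off from Definition~\ref{d:reyn_op} that each entry of $\RR v$ is a finite linear combination of operators $\partial^{\beta}\Delta^{-j}$ (with $|\beta|\le 2j-1$, hence of order $\le -1$) applied to the components of $v$: with $u=\Delta^{-1}(v-\fint v)$, the terms $\nabla u$ and $(\div u)\Id$ have $j=1,|\beta|=1$, while $\nabla\P u=\nabla u-\nabla\Delta^{-1}\div u$ also produces a $j=2,|\beta|=2$ piece. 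Since $\fint F$ is a constant, which does not affect the $C^\alpha$ norm, it suffices to prove the stated bound with $\RR(F)$ replaced by $\mathcal T F$ for an arbitrary such $\mathcal T=\partial^{\beta}\Delta^{-j}$.

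\emph{Stationary phase.} Because $\lambda k\in\Z^3$, conjugating by $e^{i\lambda k\cdot x}$ turns $\Delta$ into $\mathcal L-\lambda^2|k|^2\,\Id$, where $\mathcal L:=\Delta+2i\lambda k\cdot\nabla$; concretely $\Delta^{-j}(a\,e^{i\lambda k\cdot x})=b\,e^{i\lambda k\cdot x}$ with $b=(-1)^{j}(\lambda^2|k|^2-\mathcal L)^{-j}a$ (up to a harmless constant). We expand $b$ by the Neumann series $b=(-1)^{j}\sum_{n\ge0}\binom{n+j-1}{n}(\lambda^2|k|^2)^{-n-j}\mathcal L^n a$, truncated at an order $M=M(m,\alpha)$. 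The key bookkeeping is that $\mathcal L^n a$ is a sum of terms $\lambda^{p}\partial^{q}a$ with $p+q=2n$ and $0\le p\le n$, so $(\lambda^2|k|^2)^{-n-j}\mathcal L^n a$ has size $\lesssim\lambda^{-2j-n-s}[a]_{n+s}$; the truncation remainder solves a Poisson equation whose right-hand side carries $\ge M$ derivatives of $a$ and $\ge M$ saved powers of $\lambda$, and is controlled by the standard elliptic estimate for $\Delta^{-1}$ on $\T^3$ with constant independent of the frequency $\lambda k$. It then remains to expand $\partial^\beta(b\,e^{i\lambda k\cdot x})=\sum_{\gamma\le\beta}\binom{\beta}{\gamma}(i\lambda k)^{\gamma}(\partial^{\beta-\gamma}b)\,e^{i\lambda k\cdot x}$; using $|\beta|\le 2j-1$, the worst term places all derivatives on the phase with $b$ at leading order $\sim(\lambda^2|k|^2)^{-j}a$, which gives $\lesssim\lambda^{|\beta|-2j}\|a\|_0\le\lambda^{-1}\|a\|_0$. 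The extra factor $\lambda^{\alpha}$ in the first two terms of the bound appears when an $\alpha$-H\"older increment is taken in the phase (where $[e^{i\lambda k\cdot x}]_\alpha\lesssim\lambda^\alpha$) rather than in $a$, while $[a]_m$ and $[a]_{m+\alpha}$ collect the contributions with derivatives landing on $a$.

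\emph{Main obstacle.} The genuine work is precisely this bookkeeping of the Neumann series: one must verify that, although $\mathcal L$ contains the size-$\lambda$ drift $2i\lambda k\cdot\nabla$, each iteration is damped by $(\lambda^2|k|^2)^{-1}$, so that $n$ steps produce a net gain $\lambda^{-n-s}$ at the cost of $n+s\le 2n$ derivatives of $a$; hence for $a$ merely in $C^{m+\alpha}$ one may run the recursion only finitely many times and must close by applying an elliptic Schauder estimate to the remainder, taking care that the constants on $\T^3$ are uniform in $\lambda k$ (obtainable e.g.\ via a Littlewood--Paley decomposition, or by rescaling together with interior Schauder theory). The remaining ingredients---handling the zeroth-order Riesz-type pieces coming from $\nabla\Delta^{-1}\div$ inside $\P$ by the same expansion, and the routine Leibniz-rule and interpolation manipulations converting the pointwise bounds into the stated $C^\alpha$ estimate---introduce no additional difficulty.
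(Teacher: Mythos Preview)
The paper does not actually prove this proposition: it is quoted from \cite{BDIS} (Proposition~G.1(ii), Appendix~G), so there is nothing in the present paper to compare your argument against directly.

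On its own merits, your stationary-phase/Neumann-series approach is correct. The decomposition of $\RR$ into operators $\partial^\beta\Delta^{-j}$ with $|\beta|\le 2j-1$ is accurate, and the bookkeeping for the conjugated expansion---each application of $\mathcal L=\Delta+2i\lambda k\cdot\nabla$ trading one net power of $\lambda^{-1}$ for at most two derivatives of $a$, with the remainder handled by an honest Schauder estimate on $\T^3$---is right. Two places deserve slightly more care than you give them. First, the sentence ``$\fint F$ is a constant, which does not affect the $C^\alpha$ norm'' is not quite the point: the mean subtraction occurs \emph{inside} $\Delta^{-j}$, so what you really need is to replace $a$ by $a-\hat a(-\lambda k)e^{-i\lambda k\cdot x}$ before running the Neumann expansion (otherwise the conjugated operator $\mathcal L-\lambda^2|k|^2$ is singular at frequency $-\lambda k$); this is harmless since $|\hat a(-\lambda k)|\lesssim(\lambda|k|)^{-m}[a]_m$ by integration by parts, but it should be said. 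Second, the expansion naturally produces many intermediate terms of the form $\lambda^{-n}\,[a]_{n'}$ with $0<n'<m+\alpha$, and reducing these to the three displayed terms requires a systematic interpolation step; this is routine but not entirely trivial.

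For context, the argument in \cite{BDIS} is organized differently: rather than a physical-space Neumann expansion, it localizes $a$ in frequency relative to the scale $\lambda$ (a Littlewood--Paley-type splitting) and combines symbol/kernel bounds for the multiplier $\RR$ on the low-frequency piece with direct Schauder estimates on the high-frequency piece. Your route trades that dyadic machinery for an explicit remainder analysis; both lead to the same estimate.
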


\begin{proposition}\label{p:commutator}
Let $k\in\Z^3\setminus\{0\}$ be fixed. For any smooth vector field $a\in C^\infty  (\T^3;\R^3)$ and any smooth function $b$, if we set $F(x):=a(x)e^{i\lambda k\cdot x}$, we then have
\begin{equation}
\|[b, \mathcal{R}] (F)\|_\alpha \leq  C\lambda^{\alpha-2}  \|a\|_0\|b\|_1
+ C \lambda^{\alpha-m} \left(\|a\|_{m-1+\alpha} \|b\|_{1+\alpha} + \|a\|_{\alpha} \|b\|_{m+\alpha}\right)\label{e:main_est_commutator}
\end{equation}
for  $m=0,1,2,\dots$ and $\alpha\in (0,1)$, where $C=C(\alpha,m)$.
\end{proposition}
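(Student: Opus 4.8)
The plan is to exploit that the operator $\RR$ of Definition \ref{d:reyn_op} is, modulo taking first derivatives, assembled out of the inverse Laplacian $\Delta^{-1}$ and the Leray projector $\P=\Id-\nabla\Delta^{-1}\div$, so that it behaves like a pseudodifferential operator of order $-1$; the commutator of such an operator with multiplication by a smooth function $b$ gains one derivative and is therefore of order $-2$, which is exactly the extra power of $\lambda^{-1}$ relative to the Schauder estimate \eqref{e:R(F)} that \eqref{e:main_est_commutator} asserts. First I would make this quantitative on $F=a\,e^{i\lambda k\cdot x}$ by writing $[b,\RR](F)=b\,\RR(F)-\RR(bF)$, building an explicit parametrix for $\RR(c\,e^{i\lambda k\cdot x})$ in descending powers of $\lambda$, and checking that its leading term drops out of the commutator because it acts on $c$ simply by multiplication by a matrix depending only on $k$.

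For the parametrix, given smooth $c$ I would set $u:=\Delta^{-1}(c\,e^{i\lambda k\cdot x})=e^{i\lambda k\cdot x}g$, so that $g$ solves $(\Delta+2i\lambda k\cdot\nabla-\lambda^2|k|^2)g=c$, and invert by the Neumann series
\[
g=-\sum_{n\ge 0}\frac{1}{(\lambda^2|k|^2)^{n+1}}\bigl(\Delta+2i\lambda k\cdot\nabla\bigr)^nc=\sum_{n\ge 0}\lambda^{-2-n}\,g^{(k)}_n(c),
\]
each $g^{(k)}_n$ being a constant-coefficient differential operator of order $n$ depending only on $k$. Truncating at order $n=m$ and estimating the remainder exactly as in the proof of Proposition \ref{p:stat_phase}, and doing the same for $\Delta^{-1}\div(c\,e^{i\lambda k\cdot x})$ in order to expand $\P(c\,e^{i\lambda k\cdot x})$, one substitutes into Definition \ref{d:reyn_op} to get
\[
\RR(c\,e^{i\lambda k\cdot x})=e^{i\lambda k\cdot x}\sum_{j=0}^{m-1}\lambda^{-1-j}\,\RR^{(j)}_k(c)+\mathcal{S}_m(c),
\]
where $\RR^{(j)}_k$ is a constant-coefficient order-$j$ operator depending only on $k$ --- in particular $\RR^{(0)}_k$ is multiplication by a fixed matrix --- and $\mathcal{S}_m$ obeys a bound of the same shape as \eqref{e:R(F)} but with an extra factor $\lambda^{-1}$ on the first term and $\lambda^{-m}$-type decay on the rest.

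Inserting this with $c=a$ and with $c=ba$ into $[b,\RR](F)=b\,\RR(F)-\RR(bF)$, the $j=0$ contribution vanishes, $\lambda^{-1}\bigl(b\,\RR^{(0)}_k(a)-\RR^{(0)}_k(ba)\bigr)=0$, precisely because $\RR^{(0)}_k$ is pointwise matrix multiplication --- this is the crucial cancellation. For $j\ge 1$ the contribution is $\lambda^{-1-j}\,[b,\RR^{(j)}_k](a)\,e^{i\lambda k\cdot x}$, and $[b,\RR^{(j)}_k]$ is a differential operator of order $j-1$ in $a$ whose coefficients are, by Leibniz, combinations of derivatives of $b$ of orders $1,\dots,j$. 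From the $j=1$ term, which reduces to $\lambda^{-2}\mathcal{B}_k(\nabla b,a)\,e^{i\lambda k\cdot x}$ for a fixed bilinear contraction $\mathcal{B}_k$ depending only on $k$, using $\norm{h\,e^{i\lambda k\cdot x}}_\alpha\le C\norm{h}_\alpha+C\lambda^\alpha\norm{h}_0$ together with $\norm{\mathcal{B}_k(\nabla b,a)}_0\le C\norm{b}_1\norm{a}_0$, I would read off exactly the main term $C\lambda^{\alpha-2}\norm{a}_0\norm{b}_1$; the leftover $\lambda^{-2}\norm{\mathcal{B}_k(\nabla b,a)}_\alpha$, the terms $j\ge 2$, and $\mathcal{S}_m$ are all bilinear expressions in H\"older norms of $a$ and $b$ of controlled total order, and a standard interpolation-and-Young argument collapses them into the single remainder term $C\lambda^{\alpha-m}\bigl(\norm{a}_{m-1+\alpha}\norm{b}_{1+\alpha}+\norm{a}_\alpha\norm{b}_{m+\alpha}\bigr)$.

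The hard part will be that last rebalancing together with the remainder estimate for $\mathcal{S}_m$: one cannot let $m\to\infty$ in the parametrix, since the operators $g^{(k)}_n$ and their constants grow, so the truncation order must be tied to the target $m$ and $\mathcal{S}_m$ --- as well as the intermediate commutator terms --- must be bounded with the same Calder\'on--Zygmund and Schauder care as in Proposition \ref{p:stat_phase}, keeping exact track of how many derivatives land on $a$ versus on $b$ before interpolating down to the two norms appearing on the right-hand side of \eqref{e:main_est_commutator}. The nonlocal projector $\P$, which has to be expanded as well, is the other bookkeeping nuisance.
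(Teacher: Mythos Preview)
The paper does not actually prove this proposition: it is quoted without proof from Proposition~H.1, Appendix~H of \cite{BDIS} (see the sentence introducing Section~1.3). So there is no ``paper's own proof'' to compare against here.

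That said, your proposal is the correct strategy, and it is in fact the one carried out in \cite{BDIS}. The key points --- expanding $\Delta^{-1}(c\,e^{i\lambda k\cdot x})$ and $\P(c\,e^{i\lambda k\cdot x})$ by the Neumann series for $(\Delta+2i\lambda k\cdot\nabla-\lambda^2|k|^2)^{-1}$, assembling these into an expansion $\RR(c\,e^{i\lambda k\cdot x})=e^{i\lambda k\cdot x}\sum_{j\ge 0}\lambda^{-1-j}\RR^{(j)}_k(c)+\text{remainder}$, and then observing that $\RR^{(0)}_k$ is pure matrix multiplication so that $[b,\RR^{(0)}_k]=0$ --- are exactly right, and this cancellation is precisely what buys the extra $\lambda^{-1}$ over \eqref{e:R(F)}. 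The part you flag as ``hard'' (controlling the truncated remainder $\mathcal{S}_m$ in $C^\alpha$ via Schauder/Calder\'on--Zygmund estimates, and then interpolating the mixed H\"older products down to the two endpoint norms on the right of \eqref{e:main_est_commutator}) is indeed where the bookkeeping lives, but there is no missing idea: it is the same machinery as in the proof of Proposition~\ref{p:stat_phase}, applied twice (once with $c=a$, once with $c=ba$) and subtracted.
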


\section{The construction of the triples $(v_q,p_q,\mathring R_q)$}\label{s:const_triple}

\subsection{The initial triple  $(v_0,p_0,\mathring R_0)$}
Let $\chi_0$ be a smooth non-negative function, compactly supported on the interval $[-\sfrac14,\sfrac14]$, bounded above by $1$ and identically equal to $1$ on $[-\sfrac18,\sfrac18]$.       We now set our initial velocity to be the divergence-free vector field 
\begin{equation*}
v_0 (t,x) := \frac12\lambda_0^{-\frac15+\eps_0}\chi_0(t)(\cos(\lambda_0 x_3),\sin(\lambda_0 x_3),0),
\end{equation*}
where here we use the notation $x=(x_1,x_2,x_3)$. The initial pressure $p_0$ is then defined to be identically zero.  Finally if we set
\[
\mathring R_0 =  \frac12\lambda_0^{-\frac65+\eps_0} \chi'_0(t)
 \begin{pmatrix}
  0 & 0 & \sin(\lambda_0 x_3) \\
  0 & 0 & -\cos(\lambda_0 x_3) \\
  \sin(\lambda_0 x_3) & -\cos(\lambda_0 x_3)  & 0
 \end{pmatrix},
\]
we obtain
\[
\partial_t v_0+\div (v_0\otimes v_0)+\nabla p_0= \div \mathring R_0.
\]
Hence the triple $(v_0,p_0,\mathcal R_0)$ is a solution to the Euler-Reynolds system \eqref{e:euler_reynolds}.   Furthermore, it follows immediately that
\[\norm{\mathring R_0}_0+\frac{1}{\lambda_0}\norm{\mathring R_0}_1\leq C \lambda_0^{-\sfrac65+\epsilon_0}.\]
Thus if $\lambda_0$ is sufficiently large we obtain (\ref{e:v_iter}-\ref{e:R_iter}) for $q=0$.  

\begin{remark}
The choice of initial triple $(v_0,p_0,\mathring R_0)$ is not of any great importance: any choice satisfying the conditions set out in Section \ref{s:setup} and is such that $\abs{v_0}\approx \lambda_0^{-\sfrac15+\eps_0}$ for times $t\in[-\sfrac18, \sfrac18]$ should suffice.
\end{remark}

\subsection{The inductive step}\label{s:perturbations}
The procedure of constructing $(v_{q+1}, p_{q+1}, \mathring{R}_{q+1})$ in terms of $(v_q, p_q, \mathring{R}_q)$ follows in the same spirit as that of the scheme outlined in \cite{BDIS} with a few minor modifications in order to satisfy the specific requirements of Proposition \ref{p:iterate}. 

We will assume that $\lambda_0$ is chosen large enough such that
\begin{equation}\label{e:summabilities}
\sum_{j< q} \lambda_j^{\sfrac23} \leq \lambda_q^{\sfrac23}\,   \quad \text {and } \sum_{j=1}^{\infty} \lambda_j^{-\sfrac15+\eps_0} \leq \frac{\lambda_0^{-\sfrac15+\eps_0} }{4}\leq \frac18.
\end{equation}
Notice as a direct consequence \eqref{e:nontrivial_req} follows from \eqref{e:v_iter} and the definition of $v_0$.

We fix a symmetric non-negative convolution kernel $\psi$ with support confined to $[-1,1]$.

With a slight abuse of notation, we will use $(v,p,\mathring{R})$ for $(v_q, p_q, \mathring{R}_q)$ and
$(v_1,p_1, \mathring{R}_1)$ for $(v_{q+1}, p_{q+1}, \mathring{R}_{q+1})$.

As was done in \cite{BDIS}, we discretize time into intervals of size $\mu^{-1}$ for some large parameter $\mu$ to be chosen later.

The choice of cut-off functions $\chi=\chi^{(q+1)}$ used in this article will differ slightly to that described in \cite{BDIS}. Specifically, we define $\chi$ to be a smooth function such that for a small parameter $\eps_1>0$  (to be chosen later) $\chi$ satisfies the following conditions:
\begin{itemize}
\item The support of $\chi$ is contained in $\left(-\frac12-\frac{\lambda_{q+1}^{-\eps_1}}4, \frac12+\frac{\lambda_{q+1}^{-\eps_1}}4\right)$.
\item In the range $\left(-\frac12+\frac{\lambda_{q+1}^{-\eps_1}}4, \frac12-\frac{\lambda_{q+1}^{-\eps_1}}4\right)$ we have $\chi\equiv 1$.
\item The sequence $\{\chi^2 (x-l)\}_{l\in\Z}$ forms a partition of unity of $\R$, i.e.\
\[
\sum_{l\in \Z} \chi^2 (x-l) = 1.
\]  
\item For $N\geq 0$ we have the estimates
\[\abs{\partial^N_x \chi}\leq C \lambda_{q+1}^{N\eps_1},\]
where the constant $C$ depends only on $N$ -- in particular it is independent of $q$.
\end{itemize}

In \cite{BDIS}, $\chi$ was simply chosen to be a $C_c^{\infty} (-\frac34, \frac34)$ function, independent of the iteration $q$, satisfying the third condition. 
Having defined $\chi$, we adopt the notation $\chi_l(t):=\chi(\mu t-l)$.  The fundamental difference to choice of $\chi$ in \cite{BDIS} is the extra factor $\lambda_{q+1}^{-\eps_1}$ appearing in the definition.    A consequence of this modification is that the Lebesgue measure of the set
\begin{equation*}
\bigcap_{q=1}^\infty \bigcup_{q'=q}^\infty \bigcup_l \support (\chi_{q', l}') 
\end{equation*}
is zero.  We will see this will provide us with a key ingredient in order to prove a.e.\ in time $C^{\sfrac13-\eps}$ convergence of the sequence $v_q$.

For each $l$ define the amplitude function
\[\rho_l=2r_0^{-1} \norm{\mathring{R}(\cdot,l\mu^{-1}) }_0.\]
The function $\rho_l$ will play a similar role to the $\rho_l$ found in \cite{BDIS}: the comparatively simpler definition above reflects the fact that we are only interested in correcting for the Reynolds error and are not attempting to construct a solution to Euler with a prescribed energy as was done in \cite{BDIS}.   In particular, up to a constant multiple, the amplitude function $\rho_{l}$ defined here provides a lower bound for the amplitude defined in \cite{BDIS}, and moreover is potentially significantly smaller.

Keeping in mind the new choices of $\rho_l$ and $\chi_l$, the construction of $(v_{1}, p_{1}, \mathring{R}_{1})$ proceeds in exactly the same manner as that described in \cite{BDIS}, with the minor exception that the mollification parameter $\ell$ will be chosen explicitly to be

\begin{equation}\label{e:ell_def}
\ell=\lambda_{q+1}^{-1+\eps_1}.
\end{equation}
 In particular assuming $\frac{\alpha-1}2>\eps_1$ and $\lambda_0$ is chosen sufficiently large, we have 
\begin{equation}\label{e:ell_lambda}
\frac1{\lambda_{q}}\leq \ell \leq \frac1{\lambda_{q+1}}.
\end{equation}
 For comparison, the choice of $\ell$ taken in \cite{BDIS} was $\ell := \delta_{q+1}^{-\sfrac{1}{8}}\delta_{q}^{\sfrac{1}{8}}\lambda_{q}^{-\sfrac{1}{4}}\lambda_{q+1}^{-\sfrac{3}{4}}$.   The parameter $\eps_1$ may be taken arbitrarily small, and consequently, the choice of $\ell$ taken here will be significantly smaller than that taken in \cite{BDIS}.

For completeness we recall the remaining steps required to construct the triple $(v_{1}, p_{1}, \mathring{R}_{1})$.

Having set
\[
R_l(x):= \rho_l \Id-  \mathring{R} (x,l\mu^{-1})\,
\]
and $v_{\ell}=v*\psi_{\ell}$, we define $R_{\ell,l}$ to be the unique solution to the following transport equation
\begin{equation*}
\left\{\begin{array}{l}
\partial_t R_{\ell,l} + v_{\ell}\cdot  \nabla R_{\ell,l} =0\\ \\
R_{\ell,l}(\frac l{\mu},\cdot)=R_l*\psi_{\ell}\, .
\end{array}\right.
\end{equation*}

For every integer $l\in [-\mu, \mu]$, we let $\Phi_l: \R^3\times (-1,1)\to \R^3$ be the solution of 
\begin{equation*}
\left\{\begin{array}{l}
\partial_t \Phi_l + v_{\ell}\cdot  \nabla \Phi_l =0\\ \\
\Phi_l (x,l \mu^{-1})=x.
\end{array}\right.
\end{equation*}

Applying Lemma \ref{l:split} with $N=2$, we denote by $\Lambda^e$ and $\Lambda^o$ the corresponding families of frequencies in $\Z^3$ and set $\Lambda := \Lambda^o$ + $\Lambda^e$. For each $k\in \Lambda$ and each $l\in \Z\cap[0,\mu]$ we then define
\begin{align*}
a_{kl}(x,t)&:=\sqrt{\rho_l}\gamma_k \left(\frac{R_{\ell,l}(x,t)}{\rho_l}\right),\\
w_{kl}(x,t)& := a_{kl}(x,t)\,B_ke^{i\lambda_{q+1}k\cdot \Phi_l(x,t)}.
\end{align*}
The perturbation $w=v_1-v$ is then defined as the sum of a ``principal part'' and a ``corrector".  The ``principal part'' being the map
\begin{align*}
w_o (x,t) := \sum_{\textrm{$l$ odd}, k\in \Lambda^o} \chi_l(t)w_{kl} (x,t) +
\sum_{\textrm{$l$ even}, k\in \Lambda^e} \chi_l(t)w_{kl} (x,t)\, .
\end{align*}
The ``corrector" $w_c$ is then defined in such a way that the sum $w= w_o+w_c$ is divergence free:
\begin{equation*}
w_c
=\sum_{kl}\chi_l\Bigl(\frac{i}{\lambda_{q+1}}\nabla a_{kl}-a_{kl}(D\Phi_{l}-\Id)k\Bigr)\times\frac{k\times B_k}{|k|^2}e^{i\lambda_{q+1}k\cdot\Phi_l}.
\end{equation*}

The new pressure is defined as
\begin{equation*}
p_1=p-\frac{|w_o|^2}{2} - \frac{1}{3} |w_c|^2 - \frac{2}{3} \langle w_o, w_c\rangle - \frac{2}{3} \langle v-v_\ell, w\rangle \,.
\end{equation*}
and finally we set $\mathring{R}_1= R^0+R^1+R^2+R^3+R^4+R^5$, where
\begin{align}
R^0 &= \mathcal R \left(\partial_tw+v_\ell\cdot \nabla w+w\cdot\nabla v_\ell\right)\label{e:R^0_def}\\
R^1 &=\mathcal R \div \Big(w_o \otimes w_o- \sum_l \chi_l^2 R_{\ell, l} 
-\textstyle{\frac{|w_o|^2}{2}}\Id\Big)\label{e:R^1_def}\\
R^2 &=w_o\otimes w_c+w_c\otimes w_o+w_c\otimes w_c - \textstyle{\frac{|w_c|^2 + 2\langle w_o, w_c\rangle}{3}} {\rm Id}\label{e:R^2_def}\\
R^3 &= w\otimes (v - v_\ell) + (v-v_\ell)\otimes w
 - \textstyle{\frac{2 \langle (v-v_{\ell}), w\rangle}{3}} \Id\label{e:R^3_def}\\
R^4&=\mathring R- \mathring{R}* \psi_\ell \label{e:R^4_def}\\
R^5&=\sum_l \chi_l^2 (\mathring{R}_{\ell, l} + \mathring{R}*\psi_\ell)\label{e:R^5_def}\, .
\end{align}

\subsection{Compact support in time}

By construction it follows that if for each integer $j$ the triple $(v_j,p_j,\mathring R_j)$ is supported in the time interval $[T,T']$ then $(v_{j+1},p_{j+1},\mathring R_{j+1})$ is supported in the time interval $[T-\mu_{j+1}^{-1}, T'+\mu_{j+1}^{-1}]$.  Therefore since $(v_0,p_0,\mathring R_0)$ is supported in the time interval $[-\sfrac14,\sfrac14]$, it follows by induction that if we assume
\begin{equation}\label{e:mu_lower}
\mu_j\geq 2^{j+2}
\end{equation}
 then triple $(v_q,p_q,\mathring R_q)$ is supported in the time interval 
\[
\big[-\sfrac14-\sum_{j=1}^{q} \mu_j^{-1},\sfrac14+\sum_{j=1}^{q} \mu_j^{-1}\big]\subset[-\sfrac12, \sfrac12].
\]

\section{Ordering of parameters}\label{s:ordering}

In order to better aid comparison to arguments of \cite{BDIS}, we introduce a  sequence of \emph{strictly decreasing} parameters $\delta_q<1$.  In Section \ref{s:conclusion} we will provide an explicit definition of $\delta_q$, but for now we restrict ourselves to specifying a number of inequalities that $\delta_q$ will need to satisfy.  Analogously to  \cite{BDIS} we will assume the following estimates
\begin{align}
\frac{1}{\lambda_q}\norm{v_q}_1&\leq \delta_q^{\sfrac12}\label{e:delta_cond_first}\\
\frac{1}{\lambda_q}\norm{p_q}_1+\frac{1}{\lambda_q^2}\norm{p_q}_2&\leq \delta_q\label{e:delta_cond_second}\\
\norm{\mathring R_{q}}_0+\frac{1}{\lambda_{q}}\norm{\mathring R_{q}}_1&\leq \frac{1}C_0 \delta_{q+1}\label{e:iter_rey}\\
\norm{\partial_t+v\cdot\nabla \mathring R_q}_0&\leq \delta_{q+1}\delta_q^{\sfrac12}\lambda_q,\label{e:delta_cond_last}
\end{align}
where $C_0>1$ is a large number to be specified in the next section.

Furthermore, we will assume in addition that the following parameter inequalities are satisfied
\begin{equation}\label{e:conditions_lambdamu_2}
\begin{split}\sum_{j< q} \delta_j \lambda_j \leq  \delta_q \lambda_q,\qquad \frac{\delta_{q}^{\sfrac{1}{2}}\lambda_q\ell}{\delta_{q+1}^{\sfrac{1}{2}}}\leq1, \\
 \frac{\delta_q^{\sfrac12}\lambda_q}{\mu}\leq \lambda_{q+1}^{-\eps_1}\quad\mbox{and}\quad
\frac{1}{\lambda_{q+1}}\leq  \frac{\delta_{q+1}^{\sfrac12}}{\mu}.\end{split}
\end{equation}

The sequence $\delta_q$ will be applied in the context of proving $\sfrac15-\eps$ convergence of the velocities $v_q$; however note that unlike the case in \cite{BDIS}, the sequence does not appear explicitly in the definition of the triples $(v_q,p_q,\mathring R_q)$.   

In order to prove a.e. time $\sfrac13-\eps$
 convergence, we will require localized estimates (in time). To this aim, we fix a time $t_0\in(-1,1)$ and set $l_{q+1}$ to be the unique integer such that $\mu t_0\in [-\sfrac12+l_{q+1},\sfrac12+l_{q+1})$.  We now introduce a new sequence of strictly decreasing parameters $\delta_{q,t_0}\leq \delta_q$ such that for a given time $t$ satisfying $\abs{\mu t-l_{q+1}}\leq 1$ we have the following estimates
\begin{align}
\frac{1}{\lambda_q}\norm{v_q}_1&\leq \delta_{q,t_0}^{\sfrac12}\label{e:delta_cond_first2}\\
\frac{1}{\lambda_q}\norm{p_q}_1+\frac{1}{\lambda_q^2}\norm{p_q}_2&\leq \delta_{q,t_0}\label{e:delta_cond_second2}\\
\norm{\mathring R_{q}}_0+\frac{1}{\lambda_{q}}\norm{\mathring R_{q}}_1&\leq\frac{1}{C_0}  \delta_{q+1,t_0}\label{e:iter_rey2}\\
\norm{\partial_t+v\cdot\nabla \mathring R_q}_0&\leq \delta_{q+1,t_0}\delta_{q,t_0}^{\sfrac12}\lambda_q.\label{e:delta_cond_last2}
\end{align}

Analogously to \eqref{e:conditions_lambdamu_2} we assume the following inequalities are satisfied
\begin{equation}\label{e:conditions_lambdamu_3}
\sum_{j< q} \delta_{j,t_0} \lambda_j \leq  \delta_{q,t_0} \lambda_q, \quad
 \frac{\delta_{q,t_0}^{\sfrac{1}{2}}\lambda_q\ell}{\delta_{q+1,t_0}^{\sfrac{1}{2}}}\leq1, \quad\text{and}\quad 
\frac{\delta_{q,t_0}^{\sfrac12}\lambda_q}{\mu} \leq \lambda_{q+1}^{-\eps_1}.
\end{equation}
The last inequality being a trivial consequence of \eqref{e:conditions_lambdamu_2} and the inequality $\delta_{q,t_0}\leq \delta_q$.
Observe that we \emph{do not} assume a condition akin to the last inequality of \eqref{e:conditions_lambdamu_2}.  This remark is worth keeping in mind as we will apply  the arguments of \cite{BDIS} extensively, where such a condition was present.  Luckily, this condition is only really required at one specific point in the paper:  the estimation of 
\[\norm{\partial_t \mathring R_1+ v_1\cdot \nabla \mathring R_1 }_0,\]
for which on a subset of time we will present sharper estimates.
This condition was also used in a few isolated cases in \cite{BDIS} in order to simplify a number of terms arising from estimates, however this was primarily done for aesthetic reasons.

\section{Estimates on the perturbation}\label{s:perturbation_estimates}

In order to bound the perturbation, we apply nearly identical arguments used in Section 3 of \cite{BDIS}.

We recall the following notation from \cite{BDIS}
\begin{align*}
\phi_{kl}(x,t)&:= e^{i\lambda_{q+1}k\cdot[\Phi_l(x,t)-x]},\\
L_{kl}&:=a_{kl}B_k+\Bigl(\frac{i}{\lambda_{q+1}}\nabla a_{kl}-a_{kl}(D\Phi_{l}-\Id)k\Bigr)\times\frac{k\times B_k}{|k|^2}.
\end{align*}
The perturbation $w$ can then be written as
\begin{equation*}
w=\sum_{kl}\chi_l\,L_{kl}\,\phi_{kl}\,e^{i\lambda_{q+1}k\cdot x}=\sum_{kl}\chi_l\,L_{kl}\,e^{i\lambda_{q+1}k\cdot\Phi_l}\,.
\end{equation*}

 For reference we note that as a consequence of \eqref{e:v_iter}, and \eqref{e:summabilities} we have
\begin{equation}\label{e:uni_v_bound}
\norm{v_q}_0 \leq 1.
\end{equation}
We also recall that as a consequence of simple convolution inequalities together with the inequalities \eqref{e:delta_cond_first2}  we have for a fixed $t_0$, $N\geq 1$ and times $t$ satisfying  $\abs{\mu_{q+1}t-l_{q+1}}<1$
\begin{equation}
\norm{v_{\ell}}_N\leq \delta_{q,t_o}^{\sfrac12}\lambda_q\ell^{-N+1}.\label{e:v_est}
\end{equation}

With this notation we now present a minor variant of Lemma 3.1 from \cite{BDIS}.

\begin{lemma}\label{l:ugly_lemma}  Fix a time $t_0\in(-1,1)$ and let $l_{q+1}$ be as before, i.e.\ the unique integer such that $t_0\in[-\sfrac12+l_{q+1},\sfrac12+l_{q+1})$.
Assuming the series of inequalities listed in Section \ref{s:ordering} hold then we have the following estimates.  For $t$ such that  $\abs{\mu t-l_{q+1}}\leq 1$ and $l\in \{l_{q+1}-1,l_{q+1},l_{q+1}+1\}$ we have
\begin{align}
\norm{D\Phi_l}_0&\leq C\, \label{e:phi_l}\\
\norm{D\Phi_l - \Id}_0 &\leq C \frac{\delta_{q,t_0}^{\sfrac{1}{2}}\lambda_q}{\mu}\label{e:phi_l_1}\\
\norm{D\Phi_l}_N&\leq C \frac{\delta_{q,t_0}^{\sfrac{1}{2}} \lambda_q }{\mu \ell^N},& N\ge 1\label{e:Dphi_l_N}
\end{align}
Moreover,
\begin{align}
\norm{a_{kl}}_0+\norm{L_{kl}}_0&\leq C \delta_{q+1,t_0}^{\sfrac12}\label{e:L}\\
\norm{a_{kl}}_N&\leq C\delta_{q+1,t_0}^{\sfrac12}\lambda_q\ell^{1-N},&N\geq 1\label{e:Da}\\
\norm{L_{kl}}_N&\leq C\delta_{q+1,t_0}^{\sfrac12}\ell^{-N},&N\geq 1\label{e:DL}\\
\norm{\phi_{kl}}_N&\leq C \lambda_{q+1} \frac{\delta_{q,t_0}^{\sfrac{1}{2}} \lambda_q}{\mu \ell^{N-1}}
+ C \left(\frac{\delta_{q,t_0}^{\sfrac{1}{2}} \lambda_q \lambda_{q+1}}{\mu}\right)^N\nonumber \\
&\stackrel{\eqref{e:ell_def}\&\eqref{e:conditions_lambdamu_2}}{\leq} C\ell^{-N}&N\geq 1.\label{e:phi}
\end{align}
Consequently, for any $N\geq 0$
\begin{align}
\norm{w_c}_N &\leq C\delta_{q+1,t_0}^{\sfrac12} \left(\frac{\lambda_q}{\lambda_{q+1}}+\frac{\delta_{q,t_0}^{\sfrac12}\lambda_q}{\mu}\right) 
\lambda_{q+1}^N\label{e:corrector_est2}\\
&\stackrel{\eqref{e:conditions_lambdamu_2}}{\leq} C\delta_{q+1}^{\sfrac12} \frac{\delta_{q}^{\sfrac12}\lambda_q}{\mu} 
\lambda_{q+1}^N\label{e:corrector_est},\\
\norm{w_o}_N&\leq C\delta_{q+1,t_0}^{\sfrac12}\lambda_{q+1}^N\label{e:W_est_N2}\\
&\leq C\delta_{q+1}^{\sfrac12}\lambda_{q+1}^N\label{e:W_est_N}.
\end{align}
The constants appearing in the above estimates depend only on $N$ and  the constant $C_0$ given in \eqref{e:iter_rey} and \eqref{e:iter_rey2}. In particular for a fixed $N$, the constants appearing in \eqref{e:L}-\eqref{e:DL} and \eqref{e:corrector_est2}-\eqref{e:W_est_N} can be made arbitrarily small by taking $C_0$ to be sufficiently large.  Furthermore, the weaker estimates \eqref{e:corrector_est} and \eqref{e:W_est_N} hold uniformly in time.
\end{lemma}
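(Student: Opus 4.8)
The proof of Lemma~\ref{l:ugly_lemma} is essentially a time-localized rerun of Lemma~3.1 of \cite{BDIS}, so the strategy is to carry over those arguments verbatim, replacing the global $\delta_q$ by the localized $\delta_{q,t_0}$ wherever the estimate originates from a bound on $v_\ell$ or on the transported data, and then at the end use $\delta_{q,t_0}\le\delta_q$ together with the inequalities of Section~\ref{s:ordering} to deduce the weaker uniform-in-time bounds \eqref{e:corrector_est} and \eqref{e:W_est_N}. Throughout, $t$ ranges over $|\mu t-l_{q+1}|\le1$ and $l$ over the three indices $l_{q+1}-1,l_{q+1},l_{q+1}+1$ — the only indices for which $\chi_l$ does not vanish on that time window — so all flow maps $\Phi_l$ are evaluated for $|t-l\mu^{-1}|\lesssim\mu^{-1}$, a time interval on which $v_\ell$ obeys \eqref{e:v_est}.

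\textbf{Step 1 (flow map estimates).} First I would establish \eqref{e:phi_l}--\eqref{e:Dphi_l_N}. These follow from differentiating the transport equation satisfied by $D\Phi_l$ and applying Gr\"onwall in $t$ over the interval $|t-l\mu^{-1}|\le\mu^{-1}$: the quantity $\|v_\ell\|_1\lesssim\delta_{q,t_0}^{\sfrac12}\lambda_q$ controls the exponential growth rate, so over a time of length $\mu^{-1}$ one picks up a factor $\exp(C\delta_{q,t_0}^{\sfrac12}\lambda_q/\mu)$, which by the third inequality of \eqref{e:conditions_lambdamu_2} is bounded — giving \eqref{e:phi_l} — and whose deviation from $1$ is $O(\delta_{q,t_0}^{\sfrac12}\lambda_q/\mu)$, giving \eqref{e:phi_l_1}. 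The higher derivative bound \eqref{e:Dphi_l_N} comes from the same Gr\"onwall argument applied to $\partial^N\Phi_l$, feeding in the derivative estimates \eqref{e:v_est} for $v_\ell$ and using that the extra factors $\ell^{-1}$ are exactly what \eqref{e:v_est} produces. This is identical to the corresponding step in \cite{BDIS} except that every $\delta_q$ is now a $\delta_{q,t_0}$.

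\textbf{Step 2 (amplitude and $L_{kl}$ estimates).} Next, \eqref{e:L}--\eqref{e:DL}. The amplitude $a_{kl}=\sqrt{\rho_l}\,\gamma_k(R_{\ell,l}/\rho_l)$ is a smooth function of $R_{\ell,l}/\rho_l$, which stays in $B_{r_0}(\Id)$ by the choice $\rho_l=2r_0^{-1}\|\mathring R(\cdot,l\mu^{-1})\|_0$ (and by \eqref{e:iter_rey2}, $\rho_l\lesssim\delta_{q+1,t_0}$, from which the $\sqrt{\rho_l}$ prefactor yields the $\delta_{q+1,t_0}^{\sfrac12}$ in \eqref{e:L}); the smallness of the constant as $C_0\to\infty$ is because $R_{\ell,l}/\rho_l-\Id$ is controlled by $C_0^{-1}$-type quantities. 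The derivative bounds \eqref{e:Da} follow from the chain rule combined with the transport estimates for $R_{\ell,l}$ (which inherit the spatial regularity of $v_\ell$ and $R_l*\psi_\ell$, hence the factors $\lambda_q\ell^{1-N}$). Then $L_{kl}$, being a fixed linear combination of $a_{kl}B_k$ and terms involving $\lambda_{q+1}^{-1}\nabla a_{kl}$ and $a_{kl}(D\Phi_l-\Id)$, inherits \eqref{e:L} and \eqref{e:DL} directly; the gain $\ell^{-N}$ rather than $\lambda_q\ell^{1-N}$ in \eqref{e:DL} uses the condition $\delta_{q,t_0}^{\sfrac12}\lambda_q\ell\le\delta_{q+1,t_0}^{\sfrac12}$ from \eqref{e:conditions_lambdamu_3} to absorb the $D\Phi_l$ contribution.

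\textbf{Step 3 (phase factors, correctors, assembly).} Estimate \eqref{e:phi} for $\phi_{kl}=e^{i\lambda_{q+1}k\cdot[\Phi_l-x]}$ is Fa\`a di Bruno applied to the exponential: each derivative either hits the phase, producing $\lambda_{q+1}\|D\Phi_l-\Id\|_{N-1}$-type terms estimated by Step~1, or iterates, and collecting the extreme cases gives the two displayed summands, which by \eqref{e:ell_def} and \eqref{e:conditions_lambdamu_2} both simplify to $C\ell^{-N}$. Finally \eqref{e:corrector_est2} and \eqref{e:W_est_N2}: writing $w_o=\sum\chi_l L_{kl}e^{i\lambda_{q+1}k\cdot\Phi_l}$ and $w_c$ through its explicit formula, one applies the product rule, uses $\|\chi_l\|_N\le C\lambda_{q+1}^{N\eps_1}$ (absorbed into the $\lambda_{q+1}^N$ since $\eps_1$ is small — this is where the modified cutoff enters), and bounds the finitely many terms by Steps~1--2; for $w_c$ the extra prefactor $\lambda_q/\lambda_{q+1}+\delta_{q,t_0}^{\sfrac12}\lambda_q/\mu$ comes from the two types of terms in its definition (the $\lambda_{q+1}^{-1}\nabla a_{kl}$ term and the $a_{kl}(D\Phi_l-\Id)$ term, the latter estimated by \eqref{e:phi_l_1}). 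The uniform-in-time versions \eqref{e:corrector_est} and \eqref{e:W_est_N} then follow by replacing $\delta_{q,t_0},\delta_{q+1,t_0}$ with the larger $\delta_q,\delta_{q+1}$ and invoking \eqref{e:conditions_lambdamu_2}.

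\textbf{Main obstacle.} The genuinely delicate point is bookkeeping which estimates must remain phrased in terms of the \emph{localized} $\delta_{q,t_0}$ and which may be relaxed to the global $\delta_q$: the localized bounds are what eventually yield the $C^{\sfrac13-\eps}$ regularity off the bad set, so one must be careful that the chain of inequalities in \eqref{e:conditions_lambdamu_3} (which, crucially, omits the last inequality of \eqref{e:conditions_lambdamu_2}) is never silently used in a place that actually needs it. Concretely, every invocation of the flow-map bounds over $|t-l\mu^{-1}|\lesssim\mu^{-1}$ must check that the $\chi_l$ in question is supported within the enlarged window $|\mu t-l_{q+1}|\le1$, which holds precisely because of the factor $\lambda_{q+1}^{-\eps_1}/4$ built into the support of $\chi$; this is the one structural deviation from \cite{BDIS} and the only place where any new argument, as opposed to transcription, is required.
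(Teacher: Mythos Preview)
Your proposal is correct and follows essentially the same approach as the paper: rerun the proof of Lemma~3.1 of \cite{BDIS} with the localized parameters $\delta_{q,t_0}$ in place of $\delta_q$, and then deduce the uniform-in-time bounds from $\delta_{q,t_0}\le\delta_q$. The paper singles out as the one point of departure precisely the extra term $\lambda_q/\lambda_{q+1}$ in \eqref{e:corrector_est2}, which you correctly trace to the $\lambda_{q+1}^{-1}\nabla a_{kl}$ contribution and the absence of the last inequality of \eqref{e:conditions_lambdamu_2} for the localized parameters; the paper then writes out the same $\|w_c\|_N$ estimate you sketch in Step~3.
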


The proof of the above lemma follows from essentially exactly the same arguments to those given in the proof of Lemma 3.1 from \cite{BDIS} -- making use of our new sequence of parameters $\delta_{q,t_0}$.  The only minor point of departure from \cite{BDIS} is the appearance of the term $\frac{\lambda_q}{\lambda_{q+1}}$ in \eqref{e:corrector_est2}.  This is related to the fact that we do not have a parameter ordering akin to the last inequality of \eqref{e:conditions_lambdamu_2} for the parameters $\delta_{q,t_0}$. Nevertheless, since $\delta_{q,t_0}\leq \delta_q$, the estimate is sharper than the corresponding estimate of \cite{BDIS} and hence we obtain \eqref{e:corrector_est}.    From the definition of $w_c$ we have 
\begin{align*}
\|w_c\|_N\leq &C\sum_{kl}\chi_l\left(\frac{1}{\lambda_{q+1}}\|a_{kl}\|_{N+1}+\|a_{kl}\|_0\|D\Phi_l-\Id\|_N+\|a_{kl}\|_N\|D\Phi_l-\Id\|_0\right)\\
&+C\|w_c\|_0\sum_l\chi_l\left(\lambda_{q+1}^N\|D\Phi_l\|_0^N+\lambda_{q+1}\|D\Phi_l\|_{N-1}\right).
\end{align*}
Hence applying \eqref{e:phi_l}-\eqref{e:Da} and applying the inequalities from Section \ref{s:ordering} we obtain \eqref{e:corrector_est2}.

\begin{corollary}\label{c:ugly_cor}
Under the assumptions of Lemma \ref{l:ugly_lemma} we have
\begin{align}
& \lambda_{q+1}^{-1}\|v_1 \|_1 + \|w \|_0 \leq  \delta_{q+1, t_0}^{\sfrac{1}{2}} \label{e:final_v_est}\\
&\lambda_{q+1}^{-2} \|p_1 \|_2 + \lambda_{q+1}^{-1}\|p_1 \|_1 + \|p_1 - p \|_0 \leq \delta_{q+1, t_0}~ .\label{e:final_p_est}
\end{align}
\end{corollary}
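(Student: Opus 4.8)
\textbf{Proof proposal for Corollary \ref{c:ugly_cor}.}

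The plan is to deduce both estimates directly from the bounds collected in Lemma \ref{l:ugly_lemma}, together with the inductive hypotheses \eqref{e:delta_cond_first2}--\eqref{e:delta_cond_second2} and the parameter orderings of Section \ref{s:ordering}. The only genuinely new estimates needed are those for the perturbation $w$ and for the pressure increment $p_1 - p$; the remaining contributions to $v_1 = v + w$ and $p_1$ come either from $v_q, p_q$ (controlled by the inductive hypotheses) or are quadratic expressions in $w_o, w_c, v - v_\ell$ which are handled by the product rule for H\"older norms.

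First I would treat \eqref{e:final_v_est}. Writing $w = w_o + w_c$, the estimate $\|w\|_0 \le \delta_{q+1,t_0}^{\sfrac12}$ follows from \eqref{e:W_est_N2} with $N=0$ and \eqref{e:corrector_est2} with $N=0$ (the corrector term being smaller by a factor $\tfrac{\lambda_q}{\lambda_{q+1}} + \tfrac{\delta_{q,t_0}^{\sfrac12}\lambda_q}{\mu} \ll 1$ by \eqref{e:ell_lambda} and \eqref{e:conditions_lambdamu_2}), provided $C_0$ is chosen large enough that the constants in \eqref{e:W_est_N2}--\eqref{e:corrector_est2} absorb into the bound — recall the Lemma asserts these constants shrink as $C_0 \to \infty$. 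For $\lambda_{q+1}^{-1}\|v_1\|_1$ I would split $\|v_1\|_1 \le \|v_q\|_1 + \|w\|_1$; the first term is $\le \lambda_q \delta_{q,t_0}^{\sfrac12} \le \lambda_{q+1}\delta_{q+1,t_0}^{\sfrac12}\cdot \tfrac{\lambda_q}{\lambda_{q+1}}\cdot\tfrac{\delta_{q,t_0}^{\sfrac12}}{\delta_{q+1,t_0}^{\sfrac12}}$, which after dividing by $\lambda_{q+1}$ is small because $\lambda_q \ll \lambda_{q+1}$ (here one uses $\delta_{q,t_0}^{\sfrac12}\lambda_q/\delta_{q+1,t_0}^{\sfrac12} \le \ell^{-1} \le \lambda_{q+1}$ from \eqref{e:conditions_lambdamu_3} and \eqref{e:ell_lambda}), while $\|w\|_1 \le C\delta_{q+1,t_0}^{\sfrac12}\lambda_{q+1}$ by \eqref{e:W_est_N2} and \eqref{e:corrector_est2} with $N=1$. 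Summing with a small constant gives \eqref{e:final_v_est}.

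Next, \eqref{e:final_p_est}. From the definition
\[
p_1 - p = -\frac{|w_o|^2}{2} - \frac13|w_c|^2 - \frac23\langle w_o, w_c\rangle - \frac23\langle v - v_\ell, w\rangle,
\]
each summand is a product of two factors, so by the Leibniz rule for H\"older norms it suffices to estimate each factor in $C^0$ and $C^1$ (and $C^2$ for the $\|p_1-p\|_2$ bound). Using $\|w_o\|_N \le C\delta_{q+1,t_0}^{\sfrac12}\lambda_{q+1}^N$ and $\|w_c\|_N \le C\delta_{q+1,t_0}^{\sfrac12}\lambda_{q+1}^N \cdot (\tfrac{\lambda_q}{\lambda_{q+1}} + \tfrac{\delta_{q,t_0}^{\sfrac12}\lambda_q}{\mu})$, the terms $|w_o|^2$, $|w_c|^2$, $\langle w_o, w_c\rangle$ are all bounded in $C^N$ by $C\delta_{q+1,t_0}\lambda_{q+1}^N$. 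For $\langle v - v_\ell, w\rangle$ one bounds $\|v - v_\ell\|_0 \le C\ell\|v\|_1 \le C\ell\lambda_q\delta_{q,t_0}^{\sfrac12} \le C\delta_{q+1,t_0}^{\sfrac12}$ by \eqref{e:conditions_lambdamu_3}, together with $\|v-v_\ell\|_N \le C\|v\|_1 \ell^{1-N} \le C\delta_{q,t_0}^{\sfrac12}\lambda_q \ell^{1-N}$ for $N \ge 1$ from \eqref{e:v_est}, and combines with the bounds on $\|w\|_N$; since $\ell^{-1} \le \lambda_{q+1}$ by \eqref{e:ell_lambda}, every term is again $\le C\delta_{q+1,t_0}\lambda_{q+1}^N$ for $N = 0,1,2$. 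Dividing by $\lambda_{q+1}^N$ as appropriate and choosing $C_0$ large (so the accumulated constants are $<\tfrac13$) yields \eqref{e:final_p_est}.

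The main obstacle is purely bookkeeping: one must verify in each term that the extra smallness factors — powers of $\lambda_q/\lambda_{q+1}$, of $\delta_{q,t_0}^{\sfrac12}\lambda_q/\mu$, and of $\ell\lambda_{q+1}$ — all collapse correctly under the orderings \eqref{e:conditions_lambdamu_2}, \eqref{e:ell_def}, \eqref{e:ell_lambda}, and \eqref{e:conditions_lambdamu_3}, and that the constants, which scale like negative powers of $C_0$ in the relevant estimates of Lemma \ref{l:ugly_lemma}, can be made small uniformly in $q$; there is no analytic difficulty beyond the product rule and the already-established Lemma.
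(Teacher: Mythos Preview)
Your approach is essentially identical to the paper's: split $w=w_o+w_c$ and $v_1=v+w$, expand $p_1-p$ by its definition, bound each quadratic term via the Leibniz rule using the estimates \eqref{e:W_est_N2}, \eqref{e:corrector_est2}, \eqref{e:v_est}, the inductive hypotheses \eqref{e:delta_cond_first2}--\eqref{e:delta_cond_second2}, and the parameter orderings \eqref{e:conditions_lambdamu_3}, \eqref{e:ell_lambda}, absorbing constants by taking $C_0$ large. The paper's own proof records exactly these decompositions and cites the same list of estimates; your write-up simply spells out the bookkeeping in more detail.
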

\begin{proof}
Recall by definition, we have the following inequalities: 
\begin{align*}
\norm{w}_N&\leq \norm{w_o}_N+\norm{w_c}_N \\
\norm{v_1}_N&\leq \norm{v}_N+\norm{w}_N\\
\norm{p_{1}-p}_N&\leq \norm{\abs{w_o}^2}_N+\norm{\abs{w_c}^2}_N
+ \norm{\ip{w_o,w_c}}_N+\norm{\abs{\ip{v-v_{\ell},w}}}_N\\
&\leq +C\norm{w_o}_N\left(\norm{w_o}_0+\norm{w_c}_0\right)+C\norm{w_c}_N\norm{w_c}_0\\
&\quad+C\norm{v-v_{\ell}}_N\norm{w}_0+C\norm{v-v_{\ell}}_0\norm{w}_N\\
\norm{p_1}_N&\leq \norm{p}_N+\norm{p_{1}-p}
\end{align*}
Hence \eqref{e:final_v_est} and \eqref{e:final_p_est} follow as a consequence of \eqref{e:delta_cond_first2},\eqref{e:delta_cond_second2}, \eqref{e:conditions_lambdamu_3}, \eqref{e:v_est}, \eqref{e:corrector_est2} and \eqref{e:W_est_N2}. 
\end{proof}

We now present a variant of Lemma 3.2 from \cite{BDIS}.  We recall from \cite{BDIS} the notation for the \emph{material derivative}: $D_t:= \partial_t + v_\ell \cdot \nabla$.
\begin{lemma}\label{l:ugly_lemma_2} Under the assumptions of Lemma \ref{l:ugly_lemma} we have
\begin{align}
\|D_t v_\ell\|_N &\leq C \delta_{q,t_0}\lambda_q(1+\lambda_q\ell^{1-N})+C\delta_{q+1,t_0}\lambda_q\ell^{-N}\, , \label{e:Dt_v2}\\
&\leq C \delta_{q,t_0}\lambda_q\ell^{-N}\label{e:Dt_v}\\
\|D_t L_{kl}\|_N &\leq C \delta_{q+1,t_0}^{\sfrac{1}{2}} \delta_{q,t_0}^{\sfrac{1}{2}} \lambda_q\ell^{-N}\, ,\label{e:DtL}\\
\|D^2_t L_{kl}\|_N &\leq C\delta_{q+1,t_0}^{\sfrac{1}{2}} \lambda_{q}\ell^{-N}(\delta_{q,t_0} \lambda_q+\delta_{q+1,t_0}\ell^{-1})\, ,\label{e:D2tL2}\\
& \leq C\delta_{q+1,t_0}^{\sfrac{1}{2}}\delta_{q,t_0} \lambda_{q}\ell^{-N-1} \label{e:D2tL}
\end{align}
Consequently for $t$ in the range $\abs{t\mu-l_{q+1}}\le \sfrac12(1- \lambda_{q+1}^{-\eps_1})$ we have
\begin{align}
\norm{D_t w_c}_N &\leq C  \delta_{q+1,t_0}^{\sfrac12}\delta_{q,t_0}^{\sfrac12}\lambda_q\lambda_{q+1}^N\, ,\label{e:Dt_wc2}\\
\norm{D_t w_o}_N &\equiv 0\, .\label{e:Dt_wo2}
\end{align}
Moreover we have the following estimates which are valid uniformly in time
\begin{align}
\norm{D_t w_c}_N &\leq C  \delta_{q+1}^{\sfrac12}\delta_q^{\sfrac12}\lambda_q\lambda_{q+1}^{N+\eps_1}\, ,\label{e:Dt_wc}\\
\norm{D_t w_o}_N &\leq C  \delta_{q+1}^{\sfrac12}\mu\lambda_{q+1}^{N+\eps_1}\, .\label{e:Dt_wo}
\end{align}
Again, we note that the constants $C$ depend only on our choice of $C_0$: in particular, the constants appearing in \eqref{e:DtL}-\eqref{e:Dt_wo} can be made arbitrarily small by taking $C_0$ sufficiently large. 
\end{lemma}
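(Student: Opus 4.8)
The plan is to follow the proof of Lemma 3.2 in \cite{BDIS} almost verbatim, substituting the localized parameters $\delta_{q,t_0}$ for $\delta_q$ and keeping careful track of the new cut-off functions $\chi_l$ whose derivatives now cost $\lambda_{q+1}^{\eps_1}$ rather than $O(1)$. The starting point is the transport structure: $v_\ell$, $R_{\ell,l}$ and $\Phi_l$ all solve transport equations along the mollified flow, so $D_t$ commutes nicely with them. First I would establish \eqref{e:Dt_v2}--\eqref{e:Dt_v}: write $D_t v_\ell = D_t(v*\psi_\ell)$, use that $D_t v = \partial_t v + v_\ell\cdot\nabla v = (v_\ell - v)\cdot\nabla v - \div\mathring R + \nabla p$ (using the Euler--Reynolds equation for $v$), then mollify and use the standard commutator estimate for $[\,v_\ell\cdot\nabla,\,*\psi_\ell\,]$ together with \eqref{e:v_est}, \eqref{e:delta_cond_second2} and \eqref{e:iter_rey2}; the term $\delta_{q+1,t_0}\lambda_q\ell^{-N}$ comes from $\div\mathring R$ via the mollification gain, and \eqref{e:Dt_v} follows from \eqref{e:ell_lambda} and the parameter inequalities \eqref{e:conditions_lambdamu_3}.

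Next I would handle $D_t L_{kl}$ and $D_t^2 L_{kl}$. The key observation, exactly as in \cite{BDIS}, is that $a_{kl}$ depends on $x,t$ only through $R_{\ell,l}$ and $\rho_l$ (the latter being $t$-independent between grid points) and $R_{\ell,l}$ is transported by $v_\ell$, so $D_t a_{kl} = \sqrt{\rho_l}\,D\gamma_k(R_{\ell,l}/\rho_l)\cdot (D_t R_{\ell,l})/\rho_l = 0$ up to the error coming from the fact that $w_c$'s building blocks involve $D\Phi_l$; more precisely $D_t L_{kl}$ picks up $D_t(D\Phi_l - \Id)$ and $D_t(\nabla a_{kl}/\lambda_{q+1})$. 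For $D_t D\Phi_l$ one differentiates the flow equation to get $D_t D\Phi_l = -(D\Phi_l)(Dv_\ell)$, which gives the factor $\delta_{q,t_0}^{\sfrac12}\lambda_q$ via \eqref{e:v_est} and \eqref{e:phi_l}; the $C^N$ estimate follows by the Leibniz rule combined with \eqref{e:Dphi_l_N} and \eqref{e:Da}. Iterating once more, for $D_t^2 L_{kl}$ one differentiates again, producing either a second derivative of $v_\ell$ (cost $\delta_{q,t_0}^{\sfrac12}\lambda_q\ell^{-1}$, hence the $\delta_{q,t_0}\lambda_q$ term after pairing with another $\delta_{q,t_0}^{\sfrac12}\lambda_q$) or $D_t D_t a_{kl}$ which, because $a_{kl}$ is transported, only sees the commutator between $D_t$ and the mollification in $R_{\ell,l}$, contributing the $\delta_{q+1,t_0}\ell^{-1}$ term; \eqref{e:D2tL} then follows from \eqref{e:ell_lambda}.

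Finally, for the consequences \eqref{e:Dt_wc2}--\eqref{e:Dt_wo}: by construction $w_o = \sum \chi_l L_{kl} e^{i\lambda_{q+1}k\cdot\Phi_l}$, and since $e^{i\lambda_{q+1}k\cdot\Phi_l}$ is transported by $v_\ell$ we have $D_t(L_{kl}e^{i\lambda_{q+1}k\cdot\Phi_l}) = (D_t L_{kl})e^{i\lambda_{q+1}k\cdot\Phi_l}$, so the only surviving term in $D_t w_o$ is $\sum (\partial_t\chi_l) L_{kl} e^{i\lambda_{q+1}k\cdot\Phi_l}$, which vanishes identically on the range $|t\mu - l_{q+1}|\le \sfrac12(1-\lambda_{q+1}^{-\eps_1})$ because there $\chi_l\equiv 1$ for the relevant $l$ and the neighbors are zero — this is precisely where the enlarged plateau of the new $\chi$ is used, giving \eqref{e:Dt_wo2}. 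Off that range, $\partial_t\chi_l$ costs $\mu\lambda_{q+1}^{\eps_1}$ and there are $O(1)$ overlapping $l$, yielding \eqref{e:Dt_wo}. For $w_c$ one expands $D_t w_c$ using the formula for $w_c$, distributes $D_t$ across $\chi_l$, $a_{kl}$, $(D\Phi_l-\Id)$ and the phase; the transported phase contributes nothing, $D_t\chi_l$ again only matters off the plateau (giving the $\lambda_{q+1}^{\eps_1}$ in \eqref{e:Dt_wc}), and the remaining terms are controlled by \eqref{e:DtL}, \eqref{e:Dphi_l_N}, the $D_t$ analogue of \eqref{e:corrector_est2}, and the parameter inequalities, producing \eqref{e:Dt_wc2}. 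I expect the main obstacle to be bookkeeping rather than conceptual: one must check that every place in \cite{BDIS} where the last inequality of \eqref{e:conditions_lambdamu_2} was silently used is either avoided here or replaced by the weaker time-uniform estimates \eqref{e:Dt_wc}--\eqref{e:Dt_wo}, and that the extra $\lambda_{q+1}^{\eps_1}$ factors from $\chi_l$ are correctly confined to the time-uniform bounds and absent from the sharp localized ones on the plateau.
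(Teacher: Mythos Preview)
Your overall plan matches the paper's proof: both follow \cite{BDIS} with the localized parameters $\delta_{q,t_0}$, track the extra $\lambda_{q+1}^{\eps_1}$ from $\chi_l'$, and exploit the plateau of $\chi$ to kill $D_t w_o$ on the good time range. The structural observations are all correct.

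There is one genuine misidentification, however, concerning \eqref{e:D2tL2}. You attribute the $\delta_{q+1,t_0}\ell^{-1}$ term to ``$D_tD_t a_{kl}$\ldots\ the commutator between $D_t$ and the mollification in $R_{\ell,l}$''. But $R_{\ell,l}$ is \emph{defined} as the exact solution of the transport equation $\partial_t R_{\ell,l}+v_\ell\cdot\nabla R_{\ell,l}=0$ with mollified initial data; hence $D_t R_{\ell,l}\equiv 0$, $D_t a_{kl}\equiv 0$, and $D_t^2 a_{kl}\equiv 0$ identically --- there is no commutator to see. The paper instead writes out $D_t^2 L_{kl}$ explicitly; the only nontrivial ingredients are $D_t(\nabla a_{kl})=-Dv_\ell^T\nabla a_{kl}$ and $D_t(D\Phi_l)=-D\Phi_l Dv_\ell$, and iterating once produces terms involving $D_tDv_\ell = DD_t v_\ell - Dv_\ell Dv_\ell$. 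The $\delta_{q+1,t_0}\ell^{-1}$ then comes from the $\div\mathring R$ piece inside $DD_t v_\ell$ via \eqref{e:Dt_v2}, while the $\delta_{q,t_0}\lambda_q$ piece comes from the pressure term (and the $Dv_\ell Dv_\ell$ product), crucially using the \emph{second-order} pressure bound in \eqref{e:delta_cond_second2}. The paper flags this as the precise spot where the improved pressure estimate is used and where the argument sharpens \cite{BDIS}.

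A minor slip: you write $w_o=\sum\chi_l L_{kl}e^{i\lambda_{q+1}k\cdot\Phi_l}$, but that expression is $w$; the principal part is $w_o=\sum\chi_l a_{kl}B_k e^{i\lambda_{q+1}k\cdot\Phi_l}$. Your conclusion for \eqref{e:Dt_wo2} is unaffected since $D_t a_{kl}=0$ and the phase is transported, so indeed only the $\partial_t\chi_l$ term survives.
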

\begin{proof}
First note that \eqref{e:DtL}, \eqref{e:Dt_wc} and \eqref{e:Dt_wo} follow by exactly the same arguments as those given in Lemma 3.2 of \cite{BDIS}  -- making use of our new sequence of parameters $\delta_{q,t_0}$.  However in contrast  \cite{BDIS}, time derivatives falling on $\chi_l$ for some $l$ pick up an additional factor of $\lambda_{q+1}^{\eps_1}$, which explains this additional factor appearing in \eqref{e:Dt_wc} and \eqref{e:Dt_wo}. 

To prove \eqref{e:Dt_v2} and \eqref{e:D2tL2}, in addition to using our new parameters $\delta_{q,t_0}$, we will take advantage of our second order inductive estimates on the pressure in order to obtain sharper estimates than those found in \cite{BDIS}.

Consider \eqref{e:Dt_v2}, we note that by the arguments of \cite{BDIS} we obtain that 
\[\|D_t v_\ell\|_N\leq \|\nabla p * \psi_\ell\|_N+\|{\rm div}\, \mathring R * \psi_\ell\|_N+C\lambda_q^2\ell^{1-N} \delta_{q,t_0}\]
Then from the inductive estimates \eqref{e:delta_cond_second2} of the pressure $p$, the estimate \eqref{e:iter_rey2} on the Reynolds stress $\mathring R$, together with standard convolution estimates, we obtain \eqref{e:Dt_v2}. From \eqref{e:ell_lambda} and since $\delta_{q+1,t_0}\leq \delta_{q,t_0}$ we obtain \eqref{e:Dt_v}.   

We now consider the estimate \eqref{e:D2tL2}.
\begin{align*}
D_t^2L_{kl}=&\Bigl(-\frac{i}{\lambda_{q+1}}(D_tDv_\ell)^T\nabla a_{kl}+\frac{i}{\lambda_{q+1}}Dv_\ell^TDv_\ell^T\nabla a_{kl}+\\
&-a_{kl}D\Phi_lDv_\ell Dv_\ell k+a_{kl}D\Phi_lD_tDv_\ell k\Bigr)\times\frac{k\times B_k}{|k|^2}.
\end{align*}
Note that $D_tDv_\ell=DD_tv_\ell-Dv_\ell Dv_\ell$, so that
\begin{align*}
\|D_tDv_\ell\|_N&\leq \|D_tv_\ell\|_{N+1}+C\|Dv_\ell\|_N\|Dv_\ell\|_0\\
&\stackrel{\eqref{e:Dt_v2}\&\eqref{e:v_est}}{\leq} C( \delta_{q,t_0}\lambda_q^2\ell^{-N}+\delta_{q+1,t_0}\lambda_q\ell^{-N-1})\left(1+\lambda_q\ell\right)\\
&\stackrel{ \eqref{e:ell_lambda}}{\leq} C \delta_{q,t_0}\lambda_q^2\ell^{-N}+C\delta_{q+1,t_0}\lambda_q\ell^{-N-1}.
\end{align*}

Hence utilizing the estimates in Lemma \ref{l:ugly_lemma} we obtain
\begin{align*}
\|D_t^2L_{kl}\|_N&\leq C\delta_{q+1,t_0}^{\sfrac{1}{2}}\lambda_q\ell^{-N}\left(\delta_{q,t_0}\lambda_q+\frac{\delta_{q+1,t_0}}{\ell}\right)\left(1+\frac{\lambda_q}{\lambda_{q+1}}+\frac{\delta_{q,t_0}^{\sfrac12}\lambda_q}{\mu}\right)\\
&\stackrel{\eqref{e:conditions_lambdamu_3}}{\leq} C\delta_{q+1,t_0}^{\sfrac{1}{2}} \lambda_q\ell^{-N}(\delta_{q,t_0} \lambda_q+\delta_{q+1,t_0}\ell^{-1}).
\end{align*}
Thus we obtain \eqref{e:D2tL2}.  
 The estimate \eqref{e:Dt_wc2} follows also as a consequence of \eqref{e:ell_lambda}, Lemma \ref{l:ugly_lemma} and \eqref{e:DtL}.
\end{proof}

\begin{remark}\label{r:pressure}
While \eqref{e:Dt_v2} and \eqref{e:D2tL2} are the analogues of the corresponding estimates in \cite{BDIS}, \eqref{e:Dt_v2} and \eqref{e:D2tL2} are sharper and are derived taking into account the bounds on the second derivatives of the pressure \eqref{e:delta_cond_second2}.
\end{remark}
\section{Estimates on the Reynolds stress}\label{s:reynolds}

In this section we describe the estimates on Reynolds stress, which follow by applying the arguments of Section 5 of \cite{BDIS} to the present scheme.  The main result is the following proposition, which is a sharper, time localized version of Proposition 5.1 of \cite{BDIS}, providing estimates for a subset of the times in the complement of the regions where the cut-off functions overlap.

\begin{proposition}\label{p:R} Fix $t$ in the range $\abs{t\mu-l_{q+1}}<\sfrac12(1- \lambda_{q+1}^{-\eps_1})$.  There is a constant $C$ such that, if $\delta_{q,t_0}$, $\delta_{q+1,t_0}$ and $\mu$,  satisfy  \eqref{e:conditions_lambdamu_3}, then we have
\begin{allowdisplaybreaks}
\begin{align}
\|R^0\|_0 +\frac{1}{\lambda_{q+1}}\|R^0\|_1+\frac{1}{\delta_{q+1,t_0}^{\sfrac{1}{2}}\lambda_{q+1}}\|D_tR^0\|_0&\leq C\delta_{q+1,t_0}^{\sfrac{1}{2}}\delta_{q,t_0}^{\sfrac{1}{2}} \lambda_q \ell\label{e:R0}\\
\|R^1\|_0 +\frac{1}{\lambda_{q+1}}\|R^1\|_1+\frac{1}{\delta_{q+1,t_0}^{\sfrac{1}{2}}\lambda_{q+1}}\|D_tR^1\|_0&\leq C \frac{\delta_{q+1,t_0}\delta_{q,t_0}^{\sfrac{1}{2}} \lambda_q \lambda_{q+1}^{\eps_1}}{\mu}+\nonumber\\&\qquad C\delta_{q+1,t_0}^{\sfrac{1}{2}}\delta_{q,t_0}^{\sfrac{1}{2}} \lambda_q \ell\label{e:R1}\\
\|R^2\|_0 +\frac{1}{\lambda_{q+1}}\|R^2\|_1+\frac{1}{\delta_{q+1,t_0}^{\sfrac{1}{2}}\lambda_{q+1}}\|D_tR^2\|_0&\leq C \frac{\delta_{q+1,t_0}\delta_{q,t_0}^{\sfrac{1}{2}} \lambda_q \lambda_{q+1}^{\eps_1}}{\mu} +\nonumber\\&\qquad C\delta_{q+1,t_0}^{\sfrac{1}{2}}\delta_{q,t_0}^{\sfrac{1}{2}} \lambda_q \ell\label{e:R2}\\
\|R^3\|_0 +\frac{1}{\lambda_{q+1}}\|R^3\|_1+\frac{1}{\delta_{q+1,t_0}^{\sfrac{1}{2}}\lambda_{q+1}}\|D_tR^3\|_0&\leq C \delta_{q+1,t_0}^{\sfrac{1}{2}} \delta_{q,t_0}^{\sfrac{1}{2}} \lambda_q \ell\label{e:R3}\\
\|R^4\|_0 +\frac{1}{\lambda_{q+1}}\|R^4\|_1+\frac{1}{\delta_{q+1,t_0}^{\sfrac{1}{2}}\lambda_{q+1}}\|D_tR^4\|_0&\leq C\delta_{q+1,t_0}^{\sfrac12}\delta_{q,t_0}^{\sfrac12} \lambda_q \ell\label{e:R4}\\
\|R^5\|_0 +\frac{1}{\lambda_{q+1}}\|R^5\|_1+\frac{1}{\delta_{q+1,t_0}^{\sfrac{1}{2}}\lambda_{q+1}}\|D_tR^5\|_0&\leq C \frac{\delta_{q+1,t_0} \delta_{q,t_0}^{\sfrac{1}{2}}\lambda_q}{\mu}+\nonumber\\&\qquad C\delta_{q+1,t_0}^{\sfrac{1}{2}}\delta_{q,t_0}^{\sfrac{1}{2}} \lambda_q\ell\, .\label{e:R5}
\end{align}
\end{allowdisplaybreaks}
Thus
\begin{multline}
\|\mathring{R}_1\|_0+\frac{1}{\lambda_{q+1}}\|\mathring{R}_1\|_1 + \frac{1}{\delta_{q+1,t_0}^{\sfrac{1}{2}}\lambda_q}\|D_t \mathring{R}_1\|_0\leq \\C \left(
 \frac{\delta_{q+1,t_0} \delta_{q,t_0}^{\sfrac{1}{2}} \lambda_q \lambda_{q+1}^{\eps_1}}{\mu} 
+ \delta_{q+1,t_0}^{\sfrac{1}{2}} \delta_{q,t_0}^{\sfrac{1}{2}} \lambda_q \ell\right)\, ,\label{e:allR2}
\end{multline}
\begin{multline}
\|\partial_t \mathring{R}_1 + v_1\cdot \nabla \mathring{R}_1\|_0\leq \\C \delta_{q+1,t_0}^{\sfrac{1}{2}}\lambda_{q+1} \left(\frac{\delta_{q+1,t_0} \delta_{q,t_0}^{\sfrac{1}{2}} \lambda_q \lambda_{q+1}^{\eps_1}}{\mu} 
+ \delta_{q+1,t_0}^{\sfrac{1}{2}} \delta_{q,t_0}^{\sfrac{1}{2}} \lambda_q \ell\right).\label{e:Dt_R_all2}
\end{multline}
\end{proposition}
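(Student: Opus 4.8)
The plan is to estimate $R^0,\dots,R^5$ one at a time, following line by line the proof of Proposition~5.1 of \cite{BDIS}, with three systematic changes: replace $\delta_q,\delta_{q+1}$ by the localized parameters $\delta_{q,t_0},\delta_{q+1,t_0}$; use the explicit value $\ell=\lambda_{q+1}^{-1+\eps_1}$; and exploit that, on the stated interval $\abs{t\mu-l_{q+1}}<\sfrac12(1-\lambda_{q+1}^{-\eps_1})$, the support properties of $\chi$ force $\chi_{l_{q+1}}\equiv1$ and $\chi_l\equiv0$ for $l\neq l_{q+1}$. Consequently no time derivative ever lands on a cut-off, and since $\Phi_l$ and $R_{\ell,l}$ are transported by $v_\ell$ one has $D_t(e^{i\lambda_{q+1}k\cdot\Phi_l})=0$, $D_tw_o\equiv0$ and $D_tw=D_tw_c$; thus all estimates of Lemmas~\ref{l:ugly_lemma} and~\ref{l:ugly_lemma_2} are available here.

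For the spatial bounds: $R^1=\mathcal R\div\bigl(w_o\otimes w_o-\sum_l\chi_l^2 R_{\ell,l}-\tfrac{|w_o|^2}2\Id\bigr)$ is treated by expanding $w_o=\sum_k a_{k,l_{q+1}}B_k e^{i\lambda_{q+1}k\cdot\Phi_{l_{q+1}}}$; Proposition~\ref{p:Beltrami} and Lemma~\ref{l:split} show that the $k+k'=0$ part of $w_o\otimes w_o$ equals exactly $\sum_l\chi_l^2 R_{\ell,l}+\tfrac{|w_o|^2}2\Id$, so that $R^1$ is $\mathcal R\div$ of a finite sum of terms oscillating at frequencies $\lambda_{q+1}|k+k'|\geq\lambda_{q+1}$, to which the stationary-phase estimate \eqref{e:R(F)} applies with amplitude controlled by \eqref{e:L}--\eqref{e:phi}. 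The term $R^0=\mathcal R(D_tw_c+w\cdot\nabla v_\ell)$ is handled the same way, now with amplitude built from $L_{kl}$, $D_tL_{kl}$ and $\nabla v_\ell$ via \eqref{e:DL}, \eqref{e:DtL}, \eqref{e:v_est}. The terms $R^2,R^3$ are algebraic and follow directly from \eqref{e:corrector_est2}, \eqref{e:W_est_N2}, \eqref{e:v_est} and $\|v-v_\ell\|_0\leq C\delta_{q,t_0}^{\sfrac12}\lambda_q\ell$. For $R^4=\mathring R-\mathring R*\psi_\ell$ the mollification estimate gives $\|R^4\|_0\leq C\ell\|\mathring R\|_1\leq C\delta_{q+1,t_0}\lambda_q\ell$, which is absorbed into $C\delta_{q+1,t_0}^{\sfrac12}\delta_{q,t_0}^{\sfrac12}\lambda_q\ell$ since $\delta_{q+1,t_0}\leq\delta_{q,t_0}$. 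For $R^5=\sum_l\chi_l^2(\mathring R_{\ell,l}+\mathring R*\psi_\ell)$ one compares the transported tensor $\mathring R_{\ell,l}$ with $\mathring R*\psi_\ell$ along the flow of $v_\ell$: they agree at $t=l\mu^{-1}$ up to a mollification error and evolve by essentially the same transport equation, so on an interval of length $\mu^{-1}$ their difference is $\mu^{-1}$ times a material-derivative bound, giving the $\mu^{-1}\delta_{q+1,t_0}\delta_{q,t_0}^{\sfrac12}\lambda_q$ term of \eqref{e:R5}. The $C^1$ bounds come from the same estimates with one extra derivative.

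For the material-derivative bounds I would use that $D_t$ commutes with $e^{i\lambda_{q+1}k\cdot\Phi_l}$ but not with $\mathcal R$: writing $D_t\mathcal R(F)=\mathcal R(D_tF)+[v_\ell\cdot\nabla,\mathcal R](F)$ and invoking the commutator estimate \eqref{e:main_est_commutator} with $b=v_\ell$ (bounded via \eqref{e:v_est}), the problem reduces to applying $\mathcal R$ and its commutator to amplitudes involving $D_tL_{kl}$, $D_t^2L_{kl}$ and $D_tv_\ell$. This is exactly where the sharpened bounds \eqref{e:Dt_v2}, \eqref{e:D2tL2} of Lemma~\ref{l:ugly_lemma_2} — which rest on the second-order pressure estimate \eqref{e:delta_cond_second2} — enter, together with \eqref{e:Dt_wc2} and the inequalities \eqref{e:conditions_lambdamu_3}; this produces the third summand in each of \eqref{e:R0}--\eqref{e:R5}. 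Summing \eqref{e:R0}--\eqref{e:R5}, the dominant contributions being $\mu^{-1}\delta_{q+1,t_0}\delta_{q,t_0}^{\sfrac12}\lambda_q\lambda_{q+1}^{\eps_1}$ (from $R^1,R^2$) and $\delta_{q+1,t_0}^{\sfrac12}\delta_{q,t_0}^{\sfrac12}\lambda_q\ell$ (common to all six), yields \eqref{e:allR2}. Finally \eqref{e:Dt_R_all2} follows by writing $\partial_t\mathring R_1+v_1\cdot\nabla\mathring R_1=D_t\mathring R_1+(w+v-v_\ell)\cdot\nabla\mathring R_1$ and bounding the transport correction by $\bigl(\|w\|_0+\|v-v_\ell\|_0\bigr)\|\mathring R_1\|_1$, using $\|w\|_0\leq\delta_{q+1,t_0}^{\sfrac12}$ (Corollary~\ref{c:ugly_cor}) and the $C^1$ bound from \eqref{e:allR2}, while $\|D_t\mathring R_1\|_0$ is already contained in \eqref{e:allR2}.

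The main obstacle is the material-derivative estimate for $R^0$ and $R^1$. There one must control the commutator $[v_\ell\cdot\nabla,\mathcal R]$ applied to $G=D_tw_c+w\cdot\nabla v_\ell$ (respectively to $\div$ of the tensor appearing in $R^1$), and, crucially, verify that the leading piece $D_t^2w_c$ of $D_tG$ obeys the improved bound \eqref{e:D2tL2} rather than the cruder estimate of \cite{BDIS} — equivalently, one must keep the second-order pressure estimate \eqref{e:delta_cond_second2} in play throughout Lemma~\ref{l:ugly_lemma_2}. Once these sharpened derivative estimates are in hand, every remaining step is the corresponding step of Section~5 of \cite{BDIS} with the parameters relabelled.
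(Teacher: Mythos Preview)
Your proposal is correct and follows essentially the same route as the paper: both defer $R^2$--$R^5$ to the arguments of Proposition~5.1 in \cite{BDIS} with the localized parameters $\delta_{q,t_0}$, both exploit that on the stated time interval only the single cut-off $\chi_{l_{q+1}}\equiv1$ survives (so no $\chi'$ terms appear and $D_tw_o=0$), both handle $R^0$ and $R^1$ via the decomposition $D_t\mathcal R=[v_\ell\cdot\nabla,\mathcal R]+\mathcal R D_t$ together with Propositions~\ref{p:stat_phase} and~\ref{p:commutator}, and both identify the sharpened bound \eqref{e:D2tL2} (resting on the second-order pressure estimate) as the key ingredient that removes the extra error term present in \cite{BDIS}. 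Your repackaging of $R^0$ as $\mathcal R(D_tw_c+w\cdot\nabla v_\ell)$ is equivalent to the paper's $\Omega_{kl}$ decomposition once $\chi'_{l_{q+1}}=0$ is used, and your derivation of \eqref{e:Dt_R_all2} from \eqref{e:allR2} via the transport correction $(w+v-v_\ell)\cdot\nabla\mathring R_1$ is exactly the paper's.
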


The arguments will be a minor variation to those found in Proposition 5.1 of \cite{BDIS}, the key differences being:
\begin{itemize}
\item[(A)] Since for all $l$ we have $\chi_l'$ is identically zero for times $t$ in the range $\abs{t\mu-l_{q+1}}<\sfrac12(1- \lambda_{q+1}^{-\eps_1})$, no positive powers of $\mu$ will appear as a consequence of differentiating in time.
\item[(B)] As previously mentioned in Section \ref{s:ordering}, in contrast to the case in \cite{BDIS}  we \emph{do not} have an estimate of the type
\begin{equation}\label{e:doesnt_exist}
\frac{1}{\lambda_{q+1}}\leq \frac{\delta_{q+1,t_0}^{\sfrac12}}{\mu}
\end{equation}
at our disposal.  
\item[(C)] In many of the material derivative estimates in \cite{BDIS} the estimate $\delta_q^{\sfrac12}\lambda_q \leq \mu$ was used in order to simplify terms: we will avoid employing such an estimate, although in its place we will sometimes use the estimate $\delta_{q,t_0}^{\sfrac12}\lambda_q \leq\delta_{q+1,t_0}^{\sfrac12}\lambda_{q+1}$.
\item[(D)] In \cite{BDIS} a new constant $\epsilon>0$ was introduced in order to state the analogous estimates: in order to minimize the number of small constants, we simply use $\eps_1$ and apply the identity $\ell=\lambda_{q+1}^{\eps_1-1}$ in order to reduce the number of terms in the estimates.
\item[(E)] No term of the type 
\begin{equation}\label{e:stupid_term}
\frac{\delta_{q+1,t_0}^{\sfrac12}\delta_{q,t_0}\lambda_q}{\lambda_{q+1}^{1-\eps}\mu\ell}, 
\end{equation}
appears in the estimate \eqref{e:R0} and within the brackets of the right hand sides of \eqref{e:allR2} and \eqref{e:Dt_R_all2}.   This is related to the fact that in \cite{BDIS} the authors did not keep track of second derivatives of the pressure (see Remark \ref{r:pressure}).\footnote{ Such a term imposes strong restrictions on the choice of $\ell$ to ensure convergence and is in part the reason for the complicated choice of $\ell$ taken in \cite{BDIS}.}
\end{itemize}

\begin{proof}
Keeping in mind the observations (A), (B), (C) and (D) above, the proof of \eqref{e:R2}-\eqref{e:R5} follows by applying nearly identical arguments to that found in Proposition 5.1 of \cite{BDIS}. Indeed the estimates on $R^2$, $R^3$, $R^4$ and $R^5$, depend on the $C^0$ of $w_o$, $w_c$, $v$, $\mathring R$, $D_t w_o$, $D_t w_c$, $D_t v_{\ell}$, $D_t \mathring R$   and the $C^1$ norm of  $w_o$, $w_c$, $v$, $p$, $\mathring R$. For bounding these quantities we use the estimates \eqref{e:delta_cond_first2}-\eqref{e:delta_cond_last2}, together with the estimates from Lemmas \ref{l:ugly_lemma} and \ref{l:ugly_lemma_2}, which are analogous to the corresponding estimates ones in \cite{BDIS}.  The estimate \eqref{e:allR2} easily follows as a consequence of \eqref{e:R0}-\eqref{e:R5}, and \eqref{e:Dt_R_all2} follows from  \eqref{e:allR2} together with the observation
\[
\|\partial_t \mathring{R}_1 + v_1\cdot \nabla \mathring{R}_1\|_0 \leq \|D_t \mathring{R}_1\|_0 + \left(\|v-v_\ell\|_0 + \|w\|_0\right)\|\mathring{R}_1\|_1 \, .
\]
Therefore we will restrict ourselves to proving the estimates \eqref{e:R0} and \eqref{e:R1}.  For reasons of brevity, in what follows we adopt the abuse of notation $l_1=l_{q+1}$.

\noindent{\bf Estimates on $R^0$.}  Recall from \cite{BDIS}  that, by the definition of  $R^0$ given by \eqref{e:R^0_def}, taking into account Propositions \ref{p:stat_phase} and \ref{p:commutator} and applying the decomposition 
\begin{equation}\begin{split}
D_tR^0&=([D_t,\mathcal R]+\mathcal R D_t)(
\partial_tw+v_\ell\cdot \nabla w+w\cdot \nabla v_\ell)\\&=([v_{\ell}\cdot \nabla,\mathcal R]+\mathcal R D_t)(
\partial_tw+v_\ell\cdot \nabla w+w\cdot \nabla v_\ell)\end{split}\label{e:comm_decomp}\end{equation}
we need to bound the terms $\Omega_{kl}$ where
\begin{equation*}
\partial_tw+v_\ell\cdot \nabla w+w\cdot \nabla v_\ell=\sum_{kl}\Omega_{kl}e^{i\lambda_{q+1}k\cdot x}\, ,
\end{equation*}
that is
\[\Omega_{kl}:=\left(\chi_l'L_{kl}+\chi_lD_tL_{kl}+\chi_lL_{kl}\cdot \nabla v_\ell\right)\phi_{kl}\,.\]
and the terms $\Omega'_{kl}$ where
\begin{equation}\label{e:Omega1}
D_t \left(\partial_tw+v_\ell\cdot \nabla w+w\cdot \nabla v_\ell\right)
 :=\sum_{k}\Omega'_{kl}e^{i\lambda_{q+1}k\cdot x},
\end{equation}
that is
\begin{align}
&\Omega'_{kl}:= \Bigl(\partial_t^2\chi_lL_{kl}+2\partial_t\chi_lD_tL_{kl}+\chi_lD_t^2L_{kl}+\nonumber\\
&+\partial_t\chi_lL_{kl}\cdot\nabla v_\ell+\chi_lD_tL_{kl}\cdot\nabla v_\ell+\chi_lL_{kl}
\cdot\nabla D_tv_\ell-\chi_lL_{kl}\cdot\nabla v_\ell\cdot \nabla v_\ell\Bigr)\phi_{kl}.\label{e:Omega2} 
\end{align}
 Precisely, applying Propositions \ref{p:stat_phase} with $\alpha=\eps_1$ we obtain
\begin{align}
\| R^0\|_0 \leq& C\sum_{kl} \left(\ell \|\Omega_{kl}\|_0 + \lambda_{q+1}^{1-N}\ell \|\Omega_{kl}\|_N + 
\lambda_{q+1}^{-N} \|\Omega_{kl}\|_{N+\eps_1}\right),\label{e:R0est}\\
\| R^0\|_1 \leq& C\lambda_{q+1}\sum_{kl} \left(\ell \|\Omega_{kl}\|_0 + \lambda_{q+1}^{1-N}\ell \|\Omega_{kl}\|_N + 
\lambda_{q+1}^{-N} \|\Omega_{kl}\|_{N+\eps_1}\right)\nonumber+\\
&\sum_{k} \left(\ell \|\Omega_{kl}\|_1 + \lambda_{q+1}^{1-N}\ell \|\Omega_{kl}\|_{N+1} + 
\lambda_{q+1}^{-N} \|\Omega_{kl}\|_{N+1+\eps_1}\right)\label{e:R0Dest}
\end{align}
and  by 
  Propositions \ref{p:stat_phase} and \ref{p:commutator} and the decomposition \eqref{e:comm_decomp} we obtain
\begin{align}
\|D_t R^0\|_0 \leq&C\sum_{kl}\bigg[ \left(\ell \|\Omega_{kl}'\|_0 + \lambda_{q+1}^{1-N}\ell \|\Omega_{kl}'\|_N + 
\lambda_{q+1}^{-N} \|\Omega_{kl}'\|_{N+\eps_1}\right)\nonumber\\
&+\ell\lambda_{q+1}^{-1} \|v_{\ell}\|_1 \|\Omega_{kl}\|_1 \nonumber\\&
+  \lambda_{q+1}^{1-N}\ell \left(\|\Omega_{kl}\|_{N+\eps_1} \|v_{\ell}\|_{1+\eps_1} +\|\Omega_{kl}\|_{1+\eps_1} \|v_{\ell}\|_{N+\eps_1}\right).\label{e:DtR0est}
\end{align}

Observe that since we assumed $\abs{t\mu-l_{1}}<\sfrac12(1- \lambda_{q+1}^{-\eps_1})$ we have that $\Omega_{kl},\Omega'_{kl}\equiv 0$ for all $l\neq l_{1}$. Moreover,  since on the given temporal range $\chi_{l_{1}}\equiv 1$ and $\chi'_{l_{1}}\equiv 0$, we have
\[\Omega_{kl_{1}}:=\left(D_tL_{kl_{1}}+L_{kl_{1}}\cdot \nabla v_\ell\right) \phi_{kl_{1}}\,,\]
and
\begin{align*}
& \Omega'_{kl_{1}}:= \Bigl(D_t^2L_{kl_1}+D_tL_{kl_{1}}\cdot\nabla v_\ell+L_{kl_{1}}
\cdot\nabla D_tv_\ell-L_{kl_{1}}\cdot\nabla v_\ell\cdot \nabla v_\ell\Bigr) \phi_{kl_{1}}.
\end{align*}

Applying Lemmas \ref{l:ugly_lemma}, Lemma \ref{l:ugly_lemma_2}, \eqref{e:v_est} and \eqref{e:conditions_lambdamu_3} we obtain
\begin{align}\label{e:Omega_Est}
\|\Omega_{kl_1}\|_N\leq C \delta_{q+1,t_0}^{\sfrac12}\delta_{q,t_0}^{\sfrac12}\lambda_q\ell^{-N}
.
\end{align}

Similarly we obtain
\begin{align}\nonumber
\|\Omega'_{kl_1}\|_N&\leq C\delta_{q+1,t_0}^{\sfrac{1}{2}} \lambda_{q}\ell^{-N}(\delta_{q,t_0} \lambda_q+\delta_{q+1,t_0}\ell^{-1})\\
&\stackrel{ \eqref{e:ell_lambda}\&\eqref{e:conditions_lambdamu_3}}{\leq} C\delta_{q+1,t_0}\delta_{q,t_0}^{\sfrac12}\lambda_q\lambda_{q+1}\ell^{-N}.\label{e:extraterm}
\end{align}
\begin{remark}\label{r:pressure2}
Note we implicitly used the estimates \eqref{e:Dt_v2} and \eqref{e:D2tL2} which rely on second order inductive estimates on the pressure (see Remark \ref{r:pressure}). In \cite{BDIS}, only first order estimates of the pressure were assumed, resulting in an additional error term of the type \eqref{e:stupid_term}.
\end{remark}

Hence choosing $N$ large enough such that $N\eps_1> 3$, then combining \eqref{e:R0est}-\eqref{e:extraterm} we obtain \eqref{e:R0}.

\medskip

\noindent{\bf Estimates on $R^1$.}  Recall that a key ingredient to the estimation of $R^1$ involves estimating
\[f_{klk'l'}:=\chi_l\chi_{l'}a_{kl}a_{k'l'}\phi_{kl}\phi_{k'l'}\]
and
\[D_t\left(\nabla f_{klk'l'}\,e^{i\lambda_{q+1}(k+k')\cdot x}\right)e^{-i\lambda_{q+1}(k+k')\cdot x}= \Omega''_{klk'l'}.\]

 More precisely, using Proposition \ref{p:stat_phase}, it was shown in \cite{BDIS} that
\begin{align}
\|R^1\|_0 \leq& C\mathop{\sum_{(k,l),(k',l')}}_{k+k'\neq 0} \left(\ell \|f_{klk'l'}\|_1 + \lambda_{q+1}^{1-N}\ell \|f_{klk'l'}\|_{N+1} + 
\lambda_{q+1}^{-N} [f_{klk'l'}]_{N+1+\eps_1}\right)\nonumber
\end{align}
\begin{align}
&\|R^1\|_1 \leq\nonumber \\&\quad C\lambda_{q+1}\mathop{\sum_{(k,l),(k',l')}}_{k+k'\neq 0} \left(\ell \|f_{klk'l'}\|_1 + \lambda_{q+1}^{1-N}\ell \|f_{klk'l'}\|_{N+1} + 
\lambda_{q+1}^{-N} [f_{klk'l'}]_{N+1+\eps_1}\right)\nonumber\\
&\quad+C\mathop{\sum_{(k,l),(k',l')}}_{k+k'\neq 0} \left(\ell \|f_{klk'l'}\|_2 + \lambda_{q+1}^{1-N}\ell \|f_{klk'l'}\|_{N+2} + 
\lambda_{q+1}^{-N} [f_{klk'l'}]_{N+2+\eps_1}\right)\nonumber
\end{align}
and using Proposition \ref{p:stat_phase} and \eqref{p:commutator} together with the identity $D_t\mathcal R= [v_{\ell}\cdot \nabla, \mathcal R]+\mathcal RD_t$ that
\begin{align}
\|D_t R^1\|_0 &\leq C\mathop{\sum_{(k,l),(k',l')}}_{k+k'\neq 0} \left(\ell \|\Omega''_{klk'l'}\|_1 + \lambda_{q+1}^{1-N}\ell \|\Omega''_{klk'l'}\|_{N+1} + 
\lambda_{q+1}^{-N} [\Omega''_{klk'l'}]_{N+1+\eps_1}\right)\nonumber\\
&+\ell\lambda_{q+1}^{-1} \|f_{klk'l'}\|_2 \|v_{\ell}\|_1 \nonumber\\&
+  \lambda_{q+1}^{1-N}\ell \left(\|f_{klk'l'}\|_{N+1+\eps_1} \|v_{\ell}\|_{1+\eps_1} +\|f_{klk'l'}\|_{2+\eps_1} \|v_{\ell}\|_{N+\eps_1}\right)\nonumber.
\end{align}

Again  as a consequence of our assumption $\abs{t\mu-l_{1}}<\sfrac12(1- \lambda_{q+1}^{-\eps_1})$ we have that if either $l\neq l_1$ or $l'\neq l_1$ then $f_{klk'l'}\equiv 0$ and $\Omega''_{klk'l'}\equiv 0$.  Moreover we have
\begin{align*}
\Omega_{kl_1k'l_1}'':=&-\left(a_{kl_1}Dv_\ell^T\nabla a_{k'l_1}+a_{k'l_1}Dv_\ell^T\nabla a_{kl_1}\right)\phi_{kl_1}\phi_{k'l_1}\\& -\lambda_{q+1}a_{kl_1}a_{k'l_1}\left(D\Phi_lDv_\ell^Tk+D\Phi_{l_1}Dv_\ell^Tk'\right) \phi_{kl_1}\phi_{k'l_1}.
\end{align*}

Estimating $f_{kl_1k'l_1}$ and $\Omega''_{kl_1k'l_1}$ we have from Lemma \ref{l:ugly_lemma} and Lemma \ref{l:ugly_lemma_2} for $N\geq 1$
\begin{equation}
\norm{f_{kl_1k'l_1}}_N\leq C\delta_{q+1,t_0}\ell^{1-N}\left(\lambda_q+\frac{\delta_{q,t_0}^{\sfrac12}\lambda_q \lambda_{q+1}}{\mu}\right),
\end{equation}
and  
\begin{align}
\norm{\Omega''_{kl_1k'l_1}}_0&\leq C\delta_{q+1,t_0}\delta_{q,t_0}^{\sfrac12}\lambda_q\left(\lambda_q+ \lambda_{q+1}\right)\leq C\delta_{q+1,t_0}\delta_{q,t_0}^{\sfrac12}\lambda_q\lambda_{q+1}\\
\norm{\Omega''_{kl_1k'l_1}}_N&\leq C\delta_{q+1,t_0}\delta_{q,t_0}^{\sfrac12}\lambda_q\lambda_{q+1}\ell^N,
\end{align}
for $N\geq 1$.

Combining the above estimates and again selecting $N$ such that $N\eps_1> 3$ we obtain \eqref{e:R1}.
\end{proof}

We now state uniform estimates for the new Reynolds stress. Taking advantage of some of the additional observations used previously to prove Proposition \ref{p:R},  by applying nearly identical arguments to that of Proposition 5.1 of \cite{BDIS} we obtain the following Proposition.
\begin{proposition}There is a constant $C$ such that, if $\delta_{q}$, $\delta_{q+1}$ and $\mu$,  satisfy  \eqref{e:conditions_lambdamu_2}, then we have
\begin{equation}
\|\mathring{R}_1\|_0+\frac{1}{\lambda_{q+1}}\|\mathring{R}_1\|_1 + \frac{1}{\mu}\|D_t \mathring{R}_1\|_0\leq C \left( \delta_{q+1}^{\sfrac{1}{2}} \mu\ell\lambda_{q+1}^{\eps_1} 
+ \frac{\delta_{q+1} \delta_q^{\sfrac{1}{2}} \lambda_q \lambda_{q+1}^{\eps_1}}{\mu} 
\right)\, ,\label{e:allR}
\end{equation}
\begin{equation}
\|\partial_t \mathring{R}_1 + v_1\cdot \nabla \mathring{R}_1\|_0\leq C \delta_{q+1}^{\sfrac{1}{2}}\lambda_{q+1} \left( \delta_{q+1}^{\sfrac{1}{2}} \mu\ell\lambda_{q+1}^{2\eps_1} 
+ \frac{\delta_{q+1} \delta_q^{\sfrac{1}{2}} \lambda_q \lambda_{q+1}^{2\eps_1}}{\mu} 
\right).\label{e:Dt_R_all}
\end{equation}
\end{proposition}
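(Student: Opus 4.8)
The plan is to re--run, almost verbatim, the argument of Proposition~\ref{p:R} (equivalently, of Proposition~5.1 of \cite{BDIS}), but now keeping the global parameters $\delta_q$ in place of the time--localised $\delta_{q,t_0}$ and, crucially, \emph{without} the restriction $\abs{t\mu-l_{q+1}}<\sfrac12(1-\lambda_{q+1}^{-\eps_1})$. Thus I would keep the splitting $\mathring R_1=R^0+R^1+R^2+R^3+R^4+R^5$ of \eqref{e:R^0_def}--\eqref{e:R^5_def}, apply to each piece the same Schauder and commutator estimates (Propositions~\ref{p:stat_phase} and~\ref{p:commutator}), and feed in the \emph{uniform} bounds of Lemmas~\ref{l:ugly_lemma} and~\ref{l:ugly_lemma_2} -- in particular \eqref{e:corrector_est}, \eqref{e:W_est_N}, \eqref{e:Dt_wc}, \eqref{e:Dt_wo} -- together with the global inductive estimates \eqref{e:delta_cond_first}--\eqref{e:delta_cond_last} and the parameter inequalities \eqref{e:conditions_lambdamu_2}.

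The single structural change from Proposition~\ref{p:R} is that the cut--offs $\chi_l$ and their time derivatives no longer vanish on the relevant time window: for any fixed $t$ only boundedly many $\chi_l$ are non--zero, and $\abs{\partial_t^N\chi_l}\le C\mu^N\lambda_{q+1}^{N\eps_1}$, so each $\partial_t$ landing on a $\chi_l$ costs $\mu\lambda_{q+1}^{\eps_1}$ rather than producing nothing. In particular $D_tw_o$ is no longer identically zero; it is controlled by \eqref{e:Dt_wo}, and the quantities $\Omega_{kl}$, $\Omega'_{kl}$ and $\Omega''_{klk'l'}$ acquire extra summands in which $\partial_t$ hits a cut--off. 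Re--running the bounds for $R^0,\dots,R^5$ with these inputs, the new leading contributions are of two kinds: a term $C\delta_{q+1}^{\sfrac12}\mu\ell\lambda_{q+1}^{\eps_1}$, produced (most visibly in $R^0$ via $\ell\norm{\Omega_{kl}}_0$, with $\chi_l'L_{kl}\phi_{kl}$ one of the summands of $\Omega_{kl}$) whenever a time derivative falls on a $\chi_l$; and a term $C\delta_{q+1}\delta_q^{\sfrac12}\lambda_q\lambda_{q+1}^{\eps_1}\mu^{-1}$, coming -- exactly as in \cite{BDIS} -- from the low--frequency ($k+k'=0$) interactions in $R^1$, $R^2$ and from $R^5$. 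Every remaining term is of ``$R^0$--type'', bounded by $C\delta_{q+1}^{\sfrac12}\delta_q^{\sfrac12}\lambda_q\ell$, which by the third inequality of \eqref{e:conditions_lambdamu_2} (i.e.\ $\delta_q^{\sfrac12}\lambda_q\le\mu\lambda_{q+1}^{-\eps_1}$) is $\le C\delta_{q+1}^{\sfrac12}\mu\ell\lambda_{q+1}^{\eps_1}$. Summing over the finitely many non--zero $l$ and choosing $N$ with $N\eps_1>3$, exactly as in Proposition~\ref{p:R}, yields \eqref{e:allR}.

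For \eqref{e:Dt_R_all} I would argue as at the end of the proof of Proposition~\ref{p:R}: from $\partial_t+v_1\cdot\nabla=D_t+(v-v_\ell+w)\cdot\nabla$,
\[
\norm{\partial_t\mathring R_1+v_1\cdot\nabla\mathring R_1}_0\le\norm{D_t\mathring R_1}_0+\big(\norm{v-v_\ell}_0+\norm{w}_0\big)\norm{\mathring R_1}_1.
\]
Here $\norm{v-v_\ell}_0\le C\ell\norm{v}_1\le C\delta_q^{\sfrac12}\lambda_q\ell\le C\delta_{q+1}^{\sfrac12}$ by \eqref{e:delta_cond_first} and the second inequality of \eqref{e:conditions_lambdamu_2}, while $\norm{w}_0\le C\delta_{q+1}^{\sfrac12}$ by \eqref{e:W_est_N} and \eqref{e:corrector_est}; and $\norm{\mathring R_1}_1$ is at most $\lambda_{q+1}$, and $\norm{D_t\mathring R_1}_0$ at most $\mu$, times the right--hand side of \eqref{e:allR}. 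Using the last inequality of \eqref{e:conditions_lambdamu_2}, namely $\mu\le\delta_{q+1}^{\sfrac12}\lambda_{q+1}$ -- which, in contrast to the situation in Proposition~\ref{p:R}, is here at our disposal -- the factor $\mu$ (resp.\ $\delta_{q+1}^{\sfrac12}$) multiplying the right--hand side of \eqref{e:allR} is turned into $\delta_{q+1}^{\sfrac12}\lambda_{q+1}$, and one arrives at \eqref{e:Dt_R_all}; the stated power $\lambda_{q+1}^{2\eps_1}$ leaves room for the additional $\lambda_{q+1}^{\eps_1}$'s generated at this step.

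The main obstacle is not analytic -- no estimate beyond those already used in \cite{BDIS} is required -- but accounting: one must keep precise track of how many factors of $\lambda_{q+1}^{\eps_1}$ are generated (from each $\partial_t$ hitting a $\chi_l$, from the choice $\ell=\lambda_{q+1}^{-1+\eps_1}$ in \eqref{e:ell_def}, and from invoking the parameter inequalities of Section~\ref{s:ordering}, in particular the comparison $\delta_q^{\sfrac12}\le C\delta_{q+1}^{\sfrac12}\lambda_{q+1}^{\eps_1}$, which holds once $\alpha$ is chosen close enough to $1$), and verify that the cumulative loss is at most $\lambda_{q+1}^{\eps_1}$ for \eqref{e:allR} (one cut--off time derivative) and at most $\lambda_{q+1}^{2\eps_1}$ for \eqref{e:Dt_R_all} (at most two). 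The rest is the same case--by--case estimation of $R^0,\dots,R^5$ already carried out in \cite{BDIS}.
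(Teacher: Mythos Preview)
Your proposal is correct and follows exactly the route the paper takes: the paper does not give a separate proof here but simply invokes Proposition~5.1 of \cite{BDIS}, noting that time derivatives landing on $\chi_l$ produce the extra factors of $\lambda_{q+1}^{\eps_1}$ and that the residual term $\delta_{q+1}^{\sfrac12}\delta_q^{\sfrac12}\lambda_q\ell$ is absorbed (the paper puts it into the second bracket term via $\mu\le\delta_{q+1}^{\sfrac12}\lambda_{q+1}$, you into the first via $\delta_q^{\sfrac12}\lambda_q\le\mu\lambda_{q+1}^{-\eps_1}$; either works).

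One correction: the comparison $\delta_q^{\sfrac12}\le C\delta_{q+1}^{\sfrac12}\lambda_{q+1}^{\eps_1}$ you list among the ``parameter inequalities of Section~\ref{s:ordering}'' is not stated there and is in fact \emph{false} for the paper's choices $\alpha=1+\eps_0$, $\eps_1=\eps_0^2/18$ (it would require roughly $\eps_0/5\lesssim\eps_0^2/18$). Fortunately it is never needed in the argument, so this is only a slip in your bookkeeping list, not a gap in the proof.
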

 
In contrast to \cite{BDIS}, the extra factors of $\lambda_{q+1}^{\eps_1}$ appearing in \eqref{e:allR} and \eqref{e:Dt_R_all} are due to the fact that in the present scheme derivatives falling on $\chi_l$ pick up an extra factor of $\lambda_{q+1}^{\eps_1}$. A second point of difference to \cite{BDIS} is that unlike \cite{BDIS}, no terms of the form
\begin{equation}\label{e:absent_terms}
\delta_{q+1}^{\sfrac{1}{2}} \delta_q^{\sfrac{1}{2}} \lambda_q \ell+\frac{\delta_{q+1}^{\sfrac12}\delta_q\lambda_q}{\lambda_{q+1}^{1-\eps}\mu}
\end{equation}
appear within the brackets of the right hand sides of \eqref{e:allR} and \eqref{e:Dt_R_all}. The absence of the first term in \eqref{e:absent_terms} can be easily explained by the fact that by \eqref{e:conditions_lambdamu_2} we have
\[\delta_{q+1}^{\sfrac{1}{2}} \delta_q^{\sfrac{1}{2}} \lambda_q \ell\leq \frac{\delta_{q+1} \delta_q^{\sfrac{1}{2}} \lambda_q \lambda_{q+1}^{\eps_1}}{\mu}. \]
The absence of the second term in  \eqref{e:absent_terms} follows by the same reasoning as the absence of an analogous term in Proposition \ref{p:R} (see the comment (D) after the statement of Proposition \ref{p:R} and Remarks \ref{r:pressure} and \ref{r:pressure2}).

\section{Choice of the parameters and conclusion of the proof}\label{s:conclusion}
We begin by noting that we have not imposed any upper bounds on the choice of $\lambda_0$ and thus we are free to choose $\lambda_0$ to be as large as need be: in what follows we will use this fact multiple times without further comment.

\subsection*{$1/5-\eps$ convergence}  We now make the following parameter choices
\begin{align*}
\alpha&:=1+\eps_0, & \lambda_q&= \floor{\lambda_0^{\alpha^q}},\\ 
\eps_1&:=\frac{\eps_0^2}{18},&
\delta_q &:= \lambda_q^{-\sfrac25+2\eps_0},\\
 \mu&:= \delta_q^{\sfrac{1}{4}}\delta_{q+1}^{\sfrac{1}{4}} \lambda_q^{\sfrac{1}{2}} \lambda_{q+1}^{\sfrac{1}{2}},
\end{align*}
here $\floor{a}$ denotes the largest integer smaller than $a$. It is worth noting that with the above choices, our definition of $\mu$ agrees with the definition given in \cite{BDIS}.

Having made the above choices it is clear the inequalities \eqref{e:mu_lower} and \eqref{e:conditions_lambdamu_2} are satisfied. Moreover assuming \eqref{e:delta_cond_first}-\eqref{e:delta_cond_last}, it follows as a consequence of 
 Corollary \ref{c:ugly_cor} that \eqref{e:delta_cond_first} and \eqref{e:delta_cond_second} are satisfied with $q$ replaced by $q+1$.  In order to show \eqref{e:iter_rey} and \eqref{e:delta_cond_last} with $q$ replaced with $q+1$ we note that with our choices of parameters we obtain from \eqref{e:allR} and \eqref{e:Dt_R_all} that
\begin{align*}
\norm{\mathring R_{q+1}}_0+\frac{1}{\lambda_{q+1}}\norm{\mathring R_{q+1}}_1&\leq C\delta_{q+1}^{\sfrac12}\mu\ell\lambda_{q+1}^{\eps_1}\\
\frac1{\delta_{q+1}^{\sfrac12}\lambda_{q+1}}\norm{(\partial_t+v_{q+1}\cdot \nabla)\mathring R_{q+1}}_0&\leq C\delta_{q+1}^{\sfrac12}\mu\ell\lambda_{q+1}^{2\eps_1}.
\end{align*}
Hence since 
\[\delta_{q+1}^{\sfrac12}\mu\ell\lambda_{q+1}^{2\eps_1}\leq C\lambda_q^{-\sfrac25+\frac{6\eps_0}{5}+\frac{5\eps_0^2}{3}+\frac{\eps_0^3}{6}},\\
\leq C \delta_{q+2}\lambda_q^{-\eps_0^2}\]
we obtain both \eqref{e:iter_rey}, and \eqref{e:delta_cond_last} with $q$ replaced by $q+1$.
Since the inequalities \eqref{e:delta_cond_first}-\eqref{e:delta_cond_last} hold for $q=0$, we obtain by induction that the inequalities hold for $q \in \N$.   The inequalities \eqref{e:v_iter}-\eqref{e:R_iter} with $q$ replaced by $q+1$ then follow as a consequence of Corollary \ref{c:ugly_cor}, together with the estimates on $v$, $p$, $\mathring R$, $w$, $w_o$ and $w_c$.
In particular, one may derive time derivative estimates on $w$ and $p_1-p$ from the simple decomposition $\partial_t=D_t-v_{\ell}\cdot\nabla$ and the estimates
\begin{align*}
\norm{\partial_t w}_0&\leq \norm{\partial_t w_o}_0+\norm{\partial_t w_c}_0\\
&\stackrel{\eqref{e:uni_v_bound}}{\leq}  \norm{D_t w_o}_0+\norm{D_t w_c}_0+\norm{w_o}_1+\norm{w_c}_1
\end{align*}
and
\begin{align*}
\|\partial_t (p_{q+1} - p_q)\|_0 &\leq (\|w_c \|_0 + \|w_o\|_0) (\|\partial_t w_c\|_0+ \|\partial_t w_o\|_0)\\
&\qquad + 2 \|w\|_0\|\partial_t v\|_0 
+ \ell\|v\|_1\|\partial_t w\|_0\\
&\stackrel{\eqref{e:uni_v_bound}}{\leq} (\|w_c \|_0 + \|w_o\|_0) (\norm{D_t w_o}_0+\norm{D_t w_c}_0+\norm{w_o}_1+\norm{w_c}_1)\\
&\qquad + 2 \|w\|_0(\|\partial_t v+v\cdot\nabla v\|_0+\|v\|_1) 
+ \ell\|v\|_1\|\partial_t w\|_0\\
&\leq (\|w_c \|_0 + \|w_o\|_0) (\norm{D_t w_o}_0+\norm{D_t w_c}_0+\norm{w_o}_1+\norm{w_c}_1)\\
&\qquad + 2 \|w\|_0(\|p\|_1+\|\mathring R\|_1+\|v\|_1)
+ \ell\|v\|_1\|\partial_t w\|_0
\end{align*}
The required estimates then follow as a consequence from \eqref{e:delta_cond_first}-\eqref{e:iter_rey}, \eqref{e:conditions_lambdamu_2}, \eqref{e:corrector_est}, \eqref{e:W_est_N}, \eqref{e:Dt_wc} and \eqref{e:Dt_wo}.

\subsection*{$1/3-\eps$ convergence}  Let us define 
 $U^{(q)}$ to be the set
\[U^{(q)}=\bigcup_{l\in [-\mu_q,\mu_q]}[\mu_q^{-1}(l+\sfrac12-\lambda_q^{-\eps_1}),\mu_q^{-1}(l+\sfrac12+\lambda_q^{-\eps_1})],\]
i.e.\ a union of $\sim2\mu_q$ balls of radius $\lambda_{q}^{-\eps_1}\mu_q^{-1}$
 and define
\[V^{(q)}=\bigcup_{q'=q}^\infty U^{(q')}. \]
Observe that $V^{(q)}$ can be covered by a sequence of balls of radius $r_i$ such that
\begin{equation}\label{e:Hausdorff_est}
\sum r_i^d\leq 3\sum_{q'=q}^{\infty} \lambda_{q'}^{-d\eps_1}\mu_{q'}^{1-d}.
\end{equation}
Thus assuming
\begin{equation}\label{e:d_ineq}
d>\frac{(1+\alpha)(-\frac15+\eps_0+1)}{(1+\alpha)(-\frac15+\eps_0+1)+2\alpha\eps_1},
\end{equation}
it follows that the right hand side of \eqref{e:Hausdorff_est} converges to zero as $q$ tends to infinity.  

From this point on we assume $d<1$ is fixed, satisfying \eqref{e:d_ineq} -- which we note is possible due to the fact the right hand side of \eqref{e:d_ineq} is strictly less than $1$. 

For any time $t_0\in \bigcap_N V^{(N)}$  we simply set  $\delta_{q,t_0}=\delta_q$ for all $q$.

Now suppose $t_0\notin V^{(N)}$ for some integer $N$, furthermore assume $N$ to be the smallest such integer.  We now make the following parameter choices
\begin{equation*}
  \delta_{q+1,t_0}:=\begin{cases}
   \lambda_{q+1}^{-\sfrac25+2\eps_0} & \text{if } q \leq N \\
   \max\left(\lambda_q^{-\frac{\eps_0^2} 9} \delta_{q,t_0}^\alpha, \lambda_{q+1}^{-\sfrac23+2\eps_0}\right),       & \text{if } q > N
  \end{cases}
\end{equation*}
It follows that
\[ \frac{\delta_{q,t_0}\lambda_q}{\delta_{q+1,t_0}\lambda_{q+1}}\geq \lambda_q^{\sfrac{\eps_0}3},\]
from which we obtain \eqref{e:conditions_lambdamu_3} assuming $\eps_0$ is sufficiently small.  Applying   Corollary \ref{l:ugly_lemma_2} and Proposition \ref{p:R} iteratively we see that (\ref{e:delta_cond_first2}-\ref{e:delta_cond_last2}) hold for all $q\ge N$.  In particular, in order to show \eqref{e:iter_rey2} for $q$ replaced by $q+1$ we note  that by  Proposition \ref{p:R} we have for all times $t$ satisfying $\abs{t\mu_{q+1}-l_{q+1}}<\sfrac12(1- \lambda_{q+1}^{-\eps_1})$
\begin{equation}\label{e:wanted}
\|\mathring{R}_1\|_0+\frac{1}{\lambda_{q+1}}\|\mathring{R}_1\|_1 \leq \\\underbrace{C \delta_{q+1,t_0}^{\sfrac{1}{2}}\delta_{q,t_0}^{\sfrac{1}{2}}\lambda_q\ell}_I 
+ \underbrace{C\frac{\delta_{q+1,t_0} \delta_{q,t_0}^{\sfrac{1}{2}} \lambda_q \lambda_{q+1}^{\eps_1}}{\mu}}_{II}.
\end{equation}
Notice that if $\abs{\mu_{q+2}t-l_{q+2}} \leq 1$ then
\begin{align*}
\abs{t\mu_{q+1}-l_{q+1}}&\leq \frac{\mu_{q+1}}{\mu_{q+2}}\abs{\mu_{q+2}t-l_{q+2}}+\abs{\frac{\mu_{q+1}l_{q+2}}{\mu_{q+2}}-l_{q+1}}  \\
&\leq \frac{\mu_{q+1}}{\mu_{q+2}}+\mu_{q+1}\abs{\frac{l_{q+2}}{\mu_{q+2}}-t_0}+\abs{\mu_{q+1}t_0-l_{q+1}}\\
&< \frac{2\mu_{q+1}}{\mu_{q+2}}+\abs{\mu_{q+1}t_0-l_{q+1}}\\
&< 2\lambda_{q}^{-\sfrac{\eps_0}4}+\sfrac12-\lambda_{q+1}^{-\eps_1}\\
&<\sfrac12(1- \lambda_{q+1}^{-\eps_1}).
\end{align*}
Thus \eqref{e:wanted} holds for times $t$ in the range $\abs{\mu_{q+2}t-l_{q+2}}<1$. 

Taking logarithms of $I$ and $II$ we obtain
\begin{equation}\label{e:log1}
\ln I\leq \left(1+\frac{\eps_0}{2}\right)\ln \delta_{q,t_0}+\left(\frac{\eps_0^2}{ 18}+\frac{\eps_0^3}{ 18}-\eps_0\right)\ln\lambda_q+C
\end{equation}
and
\begin{equation}\label{e:log2}
\ln II \leq \left(\frac32+\eps_0\right) \ln \delta_{q,t_0}+\left(\frac15-\frac{7\eps_0}{5}-\frac{4\eps_0^2}{9}+O(\eps_0^3)\right)\ln \lambda_q+ C.
\end{equation}
Note by definition we have
\begin{equation}\label{e:log3}
\ln \delta_{q+2,t} \geq  \left(1+\eps_0\right)^2\ln \delta_{q,t_0}-\left(\frac{2\eps_0^2}{ 9}+O(\eps_0^3)\right)\ln \lambda_{q}.
\end{equation}
Thus since $\delta_{q,t_0}\geq \lambda_q^{-\sfrac23+2\eps_0}$, combining \eqref{e:log1} and \eqref{e:log3} we obtain
\begin{align}
\ln\left(\frac{I}{\delta_{q+2,t}}\right)&\leq \left(-\frac{3\eps_0}{2}-\eps_0^2\right)\ln \delta_{q,t_0}+\left(\frac{5\eps_0^2}{18}-\eps_0+O(\eps_0^3)\right)\ln\lambda_q+C\nonumber\\
&\leq \left(-\frac{\eps_0^2}{4}+O(\eps_0^3)\right)\ln\lambda_q+C.\label{e:log4}
\end{align}
Similarly, since $\delta_{q,t_0}\leq \lambda_q^{-\sfrac25+2\eps_0}$, combining \eqref{e:log2} and \eqref{e:log3} we obtain
\begin{align}
\ln\left(\frac{II}{\delta_{q+2,t}}\right)&\leq  \left(\frac12-\eps_0-\eps_0^2\right) \ln \delta_{q,t_0}+\left(\frac15-\frac{7\eps_0}{5}-\frac{2\eps_0^2}{9}+O(\eps_0^3)\right)\ln \lambda_q+ C\nonumber\\
&\leq \left(-\eps_0^2+O(\eps_0^3)\right)\ln \lambda_q+ C.\label{e:log5}
\end{align}
Hence assuming $\eps_0$ is sufficiently small, from \eqref{e:log4} and \eqref{e:log5} we obtain \eqref{e:iter_rey2} for $q$ replaced by $q+1$.

Observe also that there exists an $N'$ such that for all $q\geq N+N'$ we have
\[\delta_{q,t_0}=\lambda_q^{-\sfrac23+2\eps_0},\]
and hence the inequality \eqref{e:onsager_est} is never satisfied for $q\geq N+ N'$.  Thus
\[\Xi^{N+N'}\subset V^{N}.\]
 In particular $N'$ can be chosen universally, independent of $N$. 
Fixing $\delta>0$ and choosing $N$ such that $V^N$ can be covered by a sequence of balls of radius $r_i$ satisfying 
\[\sum r_i^d<\delta,\]
we obtain that if we set $M=N+N'$ then \eqref{e:real_Haus} is satisfied which concludes the proof of Proposition \ref{p:iterate}.

\begin{remark}
For the sake of completeness we note that analogously to the estimates \eqref{e:v_iter}-\eqref{e:R_iter}, the estimates \eqref{e:sharp1}-\eqref{e:sharp3} follow as a consequence of Lemma \ref{l:ugly_lemma}, Lemma \ref{l:ugly_lemma_2} and Proposition \ref{p:R} -- here the set $\Omega$ can be taken explicitly to be
\[\Omega:= \bigcap_{q=1}^\infty V^{(q)}.\]
\end{remark}
\bibliographystyle{acm}

\end{document}